\documentclass{ectj}

\usepackage{amsfonts,amssymb,graphics,epsfig,verbatim,bm,latexsym,amsmath,url,amsbsy}

\def\dim{{d}}
\def\RR{{\rm I\kern-0.18em R}}

\def\trace{{\rm  trace}}

\def\rank{{\rm  rank}}
\def\eexp{{\rm  exp}}

\def\sp#1#2{\left\langle #1, #2 \right\rangle}

\def\ess{ {\rm } }

\newcommand{\be}{\begin{eqnarray}}
\newcommand{\ee}{\end{eqnarray}}

\newcommand{\XX}{\mathcal{X}}

\newcommand{\ba}{\begin{array}}
\newcommand{\ea}{\end{array}}
\newcommand{\bs}{\begin{align}\begin{split}\nonumber}
\newcommand{\bsnumber}{\begin{align}\begin{split}}
\newcommand{\es}{\end{split}\end{align}}

\renewcommand{\(}{\left(}
\renewcommand{\)}{\right)}
\renewcommand{\[}{\left[}
\renewcommand{\]}{\right]}
\renewcommand{\hat}{\widehat}

\renewcommand{\theequation}{\thesection.\arabic{equation}}

\newtheorem{theorem}{Theorem}

\newtheorem{proposition}{Proposition}
\newtheorem{corollary}{Corollary}
\newtheorem{lemma}{Lemma}

\newtheorem{remark}{Remark}

\renewcommand{\thesection}{\arabic{section}}
\renewcommand{\theequation}{\arabic{section}.\arabic{equation}}

\renewcommand{\theproposition}{\arabic{section}.\arabic{proposition}}

\renewcommand{\thelemma}{\arabic{section}.\arabic{lemma}}

  \year 2014
  \received{May 2012}
  \volume{10}
  \setcounter{page}{1}

\title[Posterior Inference in Curved Exponential]{Posterior Inference in Curved Exponential Families under Increasing Dimensions}

\author[Belloni and Chernozhukov]{Alexandre Belloni$^{\dagger}$ and
                        Victor Chernozhukov$^{\ddagger}$}

\address{$^{\dagger}$Duke University, Fuqua School of Business,\\
                    100 Fuqua Drive, Durham, NC 27708, USA.}
\email{abn5@duke.edu}

\address{$^{\ddagger}$Massachusetts Institute of Technology, Department of Economics,\\
                    50 Memorial Drive, Cambridge, MA 02139, USA.}
\email{vchern@mit.edu}

\def\AmSTeX{$\cal A$\kern-.1667em\lower.5ex\hbox{$\cal M$}\kern-.125em
            $\cal S$-\TeX}
\def\BibTeX{{\rm B\kern-.05em{\sc i\kern-.025em b}\kern-.08em
            T\kern-.1667em\lower.7ex\hbox{E}\kern-.125emX}}

  \begin{document}

  \begin{abstract}
    This work studies the large sample properties of the
posterior-based inference in the curved exponential family under
increasing dimension.  The curved structure arises from the
imposition of various restrictions on the model, such as moment restrictions, and plays a fundamental role in econometrics and others branches of
data analysis.  We establish conditions under which the posterior
distribution is approximately normal, which in turn implies
various good properties of estimation and inference procedures
based on the posterior. In the process we also revisit and improve upon previous results for
the exponential family under increasing dimension by making use of concentration of measure. We also discuss a variety of applications to high-dimensional versions of the classical econometric models including the multinomial model
with moment restrictions, seemingly unrelated regression equations, and single structural equation models. In our analysis, both the parameter dimension and
the number of moments are increasing with the sample size.

  \keywords{curved exponential family,  Bernstein-Von Mises theorems, increasing dimension,
single-equation structural equations, seemingly unrelated regression, multivariate linear models,
multinomial model with moment restrictions.}

  \end{abstract}

\section{Introduction}\label{Sec:Intro}

The main motivation for this paper is to obtain large sample
results for posterior inference in the curved exponential family
under increasing dimension. In the exponential family,
the log of a density is linear in the parameters $\theta \in \Theta$;
in the curved exponential family, the parameters $\theta$ are
restricted to lie on a curve $\eta \mapsto \theta(\eta)$
parameterized by a lower dimensional parameter $\eta \in \Psi$.
There are many classical examples of densities that fall in the
curved exponential family; see for example \cite{Ef78},
\cite{lehmann}, and \cite{bandorff}. Curved exponential densities have also been
extensively used in applications \cite{Ef78,Heckman1974,hh05,hunter2007}.  An example of the condition that creates a curved structure in an exponential
family is a moment restriction of the type:
$$
\int m(x, \nu) f(x,\theta) d x =0,
$$
that restricts $\theta$ to lie on a curve that can be
parameterized as $\{\theta(\eta), \eta \in \Psi\}$, where
component $\eta=(\nu, \beta)$ contains $\nu$ and other
parameters $\beta$ that are sufficient to parameterize all
parameters $\theta \in \Theta$ that solve the above equation for
some $\nu$. In econometric applications, often moment
restrictions represent Euler equations that result from the data
being an outcome of an optimization by rational
decision-makers; see e.g.
\cite{hansen:singleton}, \cite{chamberlain}, \cite{imbens}, \cite{CH}, and \cite{Donald-Imbens-Newey}. In the last section of the paper we discuss in more details other econometric models that fit this framework, such as multivariate linear models, seemingly unrelated regressions,
single equation structural models, as in \cite{Zellner} and \cite{zellner:text}. We also discuss
multinomial model with moment restrictions. Thus, the curved exponential framework
is a fundamental complement to the exponential framework.

Under high-dimensionality, despite of its applicability,
theoretical properties of the curved exponential family are not as
well understood as the corresponding properties of the exponential
family. We contribute to the theoretical analysis
of the posterior inference in curved exponential families under high
dimensionality. We provide sufficient conditions under which
consistency and asymptotic normality of the posterior is achieved
when both the dimension of the parameter space and the sample size
are large. Our framework only requires weak conditions on the
prior distribution, which allows for improper priors. In particular, the uninformative prior always satisfies our assumptions. We also study the convergence of moments and the rates with
which we can estimate them.  We then apply these results to a variety of models where both the
parameter dimension and the number of moments are increasing with
the sample size.

The present analysis of the posterior inference in the curved
exponential family builds upon the work of
\cite{G2000} who studied posterior inference in the exponential
family under increasing dimension. Under sufficient growth
restrictions on the dimension of the model, it was shown that the
posterior distributions concentrate in neighborhoods of the true
parameter and can be approximated by an appropriate normal
distribution. Such analysis extended in a fundamental way the
classical results of \cite{Portnoy} for maximum likelihood
methods for the exponential family with increasing dimensions.

In addition to a detailed treatment of the curved exponential
family, we also revisit the exponential family setting under increasing dimension. We
present several new results that complement the results in \cite{G2000}. First, we amend the conditions on priors to allow
for a larger set of priors, for example, improper priors; second,
we use concentration inequalities for logconcave densities to
sharpen the conditions  under which the normal approximations
apply; and third, we show that the approximation of $\alpha$-th
order moments of the posterior by the corresponding moments of the
normal density becomes exponentially difficult in the moment order
$\alpha$.

We also note that by establishing the asymptotic normality of the posterior distribution we can invoke results in \cite{BelloniChernozhukov2009} that guarantees good computational properties for MCMC methods. Moreover, new results on sampling from manifolds (see \cite{Diaconis2012}) permits the implementation of different random walk schemes that are useful for implementing inference in curved exponential families. 

This work allows for increasing dimension, so it can be thought as a sieve technique. However, this paper does not formally account for the approximation errors resulting from using approximate functional forms as opposed to exact functional forms. Approximation errors can be introduced into the model and our results can also be shown to hold under more stringent conditions (approximations errors need to vanish at rates faster than the sampling errors), a sharp analysis of the impact of the approximation error can be delicate and is outside of the scope of the present paper.  An example where approximation errors are controlled is the work \cite{Bontemps2011} where a (non-parametric) Gaussian regression framework with an increasing number of regressors is studied.  We view the extension of the current (non-Gaussian) setting to non-parametric cases under sharp conditions as an important direction for future work.

The rest of the paper is organized as follows. In Section
\ref{Sec:Exp} we formally define the framework, assumptions, and
develop results for the exponential family. In Section
\ref{Sec:Curved}, the main section, we develop the results for the
curved exponential family.  In Section \ref{Sec:Applications} we apply our results on
a variety of applications. Appendices 
collect proofs of the main results and technical lemmas.

{\bf Notation.} For $a, b \in \RR^d$, their (Euclidean) inner product is denoted
by $\sp{a}{b}$, and $\|a\| = \sqrt{\sp{a}{a}}$. The unit sphere in
$\RR^d$ is denoted by $S^{d-1}= \{ v \in \RR^d : \|v \| = 1 \}$ and the $\ell_2$-ball centered at $\bar \theta$ with radius $\varepsilon>0$ is denoted by $B_\dim(\bar\theta,\varepsilon) = \{ \theta \in \RR^\dim : \|\theta-\bar\theta \| \leq \varepsilon \}$.
For a linear operator $A$, the operator norm is denoted by $\|A\|_{op}
= \sup \{ \|Aa\| : \|a\| = 1\}$. Let $\phi_d(\cdot; \mu; V)$
denote the $d$-dimensional Gaussian density function with mean
$\mu$ and covariance matrix $V$. Throughout the paper we have a triangular array of random samples $\{ X_1^{(n)} \ \ X_2^{(n)} \cdots \ X_n^{(n)}, n\geq 1\}$. For
notational convenience we will suppress the superscript $^{(n)}$ but
it is understood that ($\theta^{(n)}, \psi^{(n)}, d^{(n)}, \Theta^{(n)})$ are changing with $n$.

\section{Exponential Family Revisited}\label{Sec:Exp}

We assume that the data $\{X_i$, $i=1,\ldots,n$\} are independent $\dim$-dimensional
vectors each drawn from a $\dim$-dimensional exponential
family whose density is defined by
\begin{equation}\label{Def:Expens} f\left(x;\theta\right) = h(x)\eexp\Big(
\sp{x}{\theta} - \psi\big(\theta\big)\Big),
\end{equation}
where $\theta \in \Theta$ an open convex set of
$\RR^{\dim}$, $\psi$ is the normalizing convex
function, and $h$ depends only $x$. Let $\theta_0 \in \Theta$ denote the
(sequence of) true parameter and let $\mu =
\psi'(\theta_0)$ and $F = \psi''(\theta_0)$ be the mean and
covariance matrix of $X_i$ (with $J=F^{1/2}$ denoting its square root, i.e., $JJ = F$). We further assume that $\theta_0$ is suitable away
from the boundary of $\Theta$, namely $B_\dim(\theta_0,\sqrt{\dim\|F^{-1}\|_{op}/n}\log n) \subset \Theta$. Throughout we assume $d\to \infty$ as $n\to\infty$.

Under this framework, the posterior density of $\theta$ given the
observed data $\left\{X_i\right\}_{i=1}^n$ is defined as
\begin{equation}\label{Def:Post}
\pi_n(\theta) = \frac{\pi(\theta) \prod_{i=1}^n f(X_i;\theta)}{\int_{\Theta} \pi(\xi) \prod_{i=1}^n f(X_i;\xi) d\xi}
 = \frac{\pi(\theta) \eexp\left( \sp{\sum_{i=1}^{n} X_i}{\theta} - n \psi(\theta)
 \right)}{\int_{\Theta} \pi(\xi) \eexp\left( \sp{\sum_{i=1}^{n} X_i}{\xi} - n \psi(\xi)
 \right)d\xi} ,
\end{equation} where $\pi(\cdot)$ denotes a prior
distribution on $\Theta$.

Our results are stated in terms of a re-centered Gaussian
distribution in the local parameter space $\mathcal{U} =
\sqrt{n}J( \Theta - \theta_0)$. The
re-centering is $\Delta_n:= \sqrt{n}J^{-1}\(
\frac{1}{n}\sum_{i=1}^nX_i - \mu \)$; it follows that $E[\Delta_n]
= 0$, and $E[ \Delta_n\Delta_n' ]=I_d$ where $I_d$ denotes the $\dim$-dimensional identity matrix. Moreover, the posterior
in the local parameter space is defined for $u \in \mathcal{U}$ as
\begin{equation}\label{Def:PostLPS} \pi^*(u) = \frac{\pi(\theta_0
+ n^{-1/2} J^{-1}u) \prod_{i=1}^n f(X_i;\theta_0 + n^{-1/2}
J^{-1}u)}{\int_\mathcal{U} \pi(\theta_0 + n^{-1/2} J^{-1}u)
\prod_{i=1}^n f(X_i;\theta_0 + n^{-1/2} J^{-1}u) du}.
\end{equation}

In the same lines of \cite{Portnoy} and
\cite{G2000}, conditions on the growth rates of the third and
fourth moments are imposed. The following
quantities play an important role in the analysis:
\begin{eqnarray}
\label{Def:B1n} B_{1n}(c) &=& \sup_{\theta, a} \left\{ |E_{\theta}\left[ \sp{a}{V}^3 \right] | : a
\in S^{\dim-1}, \|J(\theta-\theta_0)\|^2 \leq \frac{c\dim}{n}
\right\}, \\
\label{Def:B2n}
B_{2n}(c) &=& \sup_{\theta, a} \left\{ E_{\theta}\left[ \sp{a}{V}^4 \right] : a
\in S^{\dim-1}, \|J(\theta-\theta_0)\|^2 \leq \frac{c\dim}{n}
\right\},\\
\label{Def:lambda}
\lambda_n(c) &=& \frac{1}{6} \left( \sqrt{\frac{c\dim}{n}}B_{1n}(0)
+ \frac{c\dim}{n} B_{2n}(c) \right),
\end{eqnarray}
where $V$ is a random variable distributed
as $J^{-1}(U-E_\theta[U])$ and $U$ has density $f(\cdot;\theta)$
as defined in (\ref{Def:Expens}). 

\begin{remark}
Although we focus on the i.i.d. framework the analysis can be directly extended to the case that observations are independent but not necessarily identically distributed provided the joint likelihood can still be written in the exponential family form (\ref{Def:Expens}). In this case the normalizing function satisfies $\psi = \frac{1}{n}\sum_{i=1}^n\psi_i$ where the function $\psi_i$ is induced by the $i$ observation. Similarly, the quantities $B_{1n}$ and $B_{2n}$ are defined as the average of across $i$ of $B_{1ni}(c)$ and $B_{2ni}(c)$ which are defined as in (\ref{Def:B1n}) and (\ref{Def:B2n}) for the $i$th observation.
\end{remark}

\begin{remark}
We note that $\lambda_n(c)$ is related but different from
the quantity with the same notation defined in \cite{G2000}. We provide a technical discussion about the differences in the Appendix. For now we note that (\ref{Def:lambda}) is always smaller than its counterpart in \cite{G2000} which leads to weaker requirements. In the specific applications of interest we develop bounds on (\ref{Def:lambda}). In the case that the density (\ref{Def:Expens}) is logconcave in the data, we provide generic bounds for (\ref{Def:lambda}) in Appendix D. 
\end{remark}

Next we impose some regularity conditions on the prior $\pi$.
~\\

{\bf Assumption P($c_n$).} For the specified positive sequence  $c_n$, the prior density function $\pi$ satisfies:
$$\ess\sup_{\theta\in \Theta} \ln
[\pi(\theta)/\pi(\theta_0)] \leq O(\dim) \ \mbox{and} \  \ | \ln \pi(\theta) - \ln \pi(\theta_0) | \leq K_n(c_n) \| \theta - \theta_0 \|
$$for any $\theta$ s.t. $\|\theta - \theta_0\| \leq
\sqrt{c_n\|F^{-1}\|_{op}\dim/n}$, with $ K_n(c_n)
\sqrt{c_n\|F^{-1}\|_{op}\dim/n} =o(1)$.

~\\
\indent In what follows the sequence $c_n$ will typically remain uniformly bounded in $n$.
These conditions differ from the ones imposed in \cite{G2000}. Although the same Lipschitz
condition is assumed, we require only a relative lower bound on
the value of the prior on the true parameter instead of an
absolute bound. Thus this condition requires that the true parameter does not have an exponentially small prior value relative to other parameter values. We note that such
conditions allow for improper priors which were not allowed in
\cite{G2000}. Importantly, the uninformative prior trivially satisfies
Assumption P.

Next we state the main results of this section.

\begin{theorem}\label{Thm:Main:improv}
For any fixed value $c>0$, suppose that
$(i)$ $B_{1n}(c) \sqrt{\dim/n} = o(1)$,
$(ii)$ $\lambda_n(c) \dim = o(1)$,
$(iii)$ $\|F^{-1}\|_{op}\dim/n = o(1)$, and
$(iv)$ Assumption P($c$) hold. Then we have asymptotic normality of the posterior density
function
$$ \int_{\mathcal{U}} | \pi_n^*(u) - \phi_\dim(u;\Delta_n,I_\dim)| du = o_p(1). $$
\end{theorem}

Theorem \ref{Thm:Main:improv} establishes the asymptotic normality of the posterior density
function. It has different
assumptions on the prior relative to Theorem 3 of \cite{G2000}. However, Theorem \ref{Thm:Main:improv} does not require additional technical
assumptions used in \cite{G2000}, as discussed in Appendix
A, and the growth condition of $\dim$ with relative
to the sample size $n$ is improved by $\ln \dim$ factors.

In some applications stronger
convergence properties for the posterior distribution can be required. The
following theorem provides sufficient conditions for the
$\alpha$-moment convergence. In what follows, for sequences of $\alpha$ and $d$, let
$M_{\dim,\alpha} := (d+\alpha) \left( 1 + \frac{\alpha
\ln(d+\alpha)}{d+\alpha} \right).$
\begin{theorem}\label{Thm:Main:Alpha}
In addition to the conditions (i) and (iii) of Theorem \ref{Thm:Main:improv}, suppose that the following  hold for any fixed $\bar{c}$: $(ii')$ $\lambda_n\left(\bar{c} M_{\dim,\alpha}/\dim\right)
[\bar{c}M_{\dim,\alpha}]^{1+\alpha/2}
=o(1)$;  $(iv')$  Assumption P($\bar{c}M_{\dim,\alpha}/\dim$) and {\small $K_n\left(\bar{c}M_{\dim,\alpha}/\dim\right)\sqrt{\|F^{-1}\|_{op}\big[\bar{c}M_{\dim,\alpha}\big]^{1+\alpha}/n} =o(1)$}.  Then we have
\begin{equation}\label{Result:Alpha}
\int_{\mathcal{U}} \|u\|^\alpha | \pi_n^*(u) - \phi_\dim(u;\Delta_n,I_\dim)| du = o_p(1). %
\end{equation}
\end{theorem}

Conditions $(ii')$ and $(iv')$ are strengthening of conditions $(ii)$ and $(iv)$ of Theorem \ref{Thm:Main:improv} respectively.
We emphasize that Theorem \ref{Thm:Main:Alpha} allows for $\alpha$
and $\dim$ to grow as the sample size increases. Our conditions
highlight the polynomial trade off between $n$ and $\dim$ which contrasts with an
exponential trade off between $n$ and $\alpha$. This suggests that
the estimation of higher moments in increasing dimensions
applications could be very delicate. Conditions $(ii')$ and
$(iv')$ simplify significantly if $\alpha\ln d = o(d)$, in which case  $M_{\dim,\alpha} = d(1+o(1))$.

\begin{remark}
Suppose that we are interested in allowing $\alpha$ to grow with
the sample size as well. If $\dim$ is growing in a polynomial rate
with respect to $n$, our results do not allow for $\alpha = O(\ln
n)$. Some limitation along these lines should be expected since
there is an exponential trade off between $\alpha$ and $n$.
However, it is possible to have  $\alpha = O(\sqrt{\ln n})$.
Such slow growth conditions illustrate the potential limitations
for the practical estimation of higher order moments.
\end{remark}

\section{Curved Exponential Family}\label{Sec:Curved}
Next we consider the curved exponential family. Let $X_1, X_2,\ldots,X_n$ be i.i.d. observations from a $d$-dimensional curved exponential family with density given by
$$
f(x;\theta) = h(x)\exp\( \sp{x}{\theta(\eta)} -
\psi(\theta(\eta))\),
$$
where $\eta \in \Psi \subset \RR^{\dim_1}$, $\theta:\Psi\to \Theta$,
an open subset of $\RR^d$, and $d \to \infty$ as $n \to \infty$ as
before.

The parameter of interest is $\eta$, whose true value $\eta_0$
is suitably bounded away from the boundary of $\Psi \subset \RR^{\dim_1}$ (see Assumption A).  The true value of $\theta$ induced by $\eta_0$ is given by $\theta_0 =
\theta(\eta_0)$. The mapping $\eta \mapsto \theta(\eta)$ takes
values from $\RR^{\dim_1}$ to $\RR^\dim$ where $ \dim_1 \leq
\dim$. Moreover, assume that $\eta_0$ is the unique
solution to the system $\theta(\eta) = \theta_0$.
Thus, the parameter $\theta$ corresponds to a high-dimensional
parametrization, and $\eta$ describes
the lower-dimensional parametrization of the density.

We require the following regularity conditions on the mapping
$\theta(\cdot)$ and on the prior.

\begin{itemize}
\item[] \textbf{Assumption A}. For every fixed $\kappa$, there exists a linear
operator $G:\RR^{d_{1}}\to \RR^d$ such that uniformly
in $\gamma \in B_{\dim_1}(0,\kappa N_n) \subset \sqrt{n}(\Psi-\eta_0)$, where $N_n = \sqrt{\dim}+\{\sqrt{d_{prior}} + \sqrt{\dim_1\|(G'FG)^{-1}\|_{op}} +\sqrt{\dim_1\log (\dim_1+\|J^{-1}\|_{op}/\varepsilon_0)}\}\max\{1, \|J^{-1}\|_{op}/\varepsilon_0\}$,
where $\varepsilon_0$ is defined in Assumption B, we have
\begin{equation}\label{Curved:Rep}
\sqrt{n}\left( \theta(\eta_0 + \gamma/\sqrt{n})-\theta(\eta_0)
\right) = R_{1n} + (I+R_{2n})G\gamma, \end{equation} where
\begin{equation}\label{Curved:Cond1}
 \begin{array}{l}
 \|R_{1n}\| \|J\|_{op}\{\sqrt{d}+ \|JG\|_{op} N_n\} =o(1) \ \ \mbox{and}\\
\{\|R_{2n}\|_{op}\sqrt{d} + \|JR_{2n}G\|_{op}N_n\}\|JG\|_{op}N_n =o(1).
\end{array}\end{equation}

\item[] \textbf{Assumption B}. Uniformly in $n$, there exist a
positive constant $\varepsilon_0$ bounded away from zero, such that for every $\eta \in \Psi$
we have
\begin{equation}\label{Curved:Cond2} \| \theta(\eta) -
\theta(\eta_0) \| \geq \varepsilon_0 \| \eta -
\eta_0\|.\end{equation}

\item[] {\bf Assumption P'.} The prior density function $\pi$ satisfies:
$$\sup_{\eta \in \Psi} \ln
[\pi(\theta(\eta))/\pi(\theta(\eta_0))] \leq O(\dim_{prior}).$$
\end{itemize}

Thus the mapping $\eta \mapsto \theta(\eta)$ is allowed to be
nonlinear and discontinuous. For example, the additional condition
of $R_{1n}=0$ implies the continuity of the mapping in a
neighborhood of $\eta_0$. More generally, condition
(\ref{Curved:Cond1}) impose that the map admits an (uniform)
approximate linearization in the neighborhood of $\eta_0$.

A prior $\pi$ on $\Theta$ induces a prior over $\Psi$ as $\pi(\eta) = \pi(\theta(\eta))/\int_\Psi \pi(\theta(\tilde \eta))d\tilde \eta$. Alternatively the prior can be placed directly over $\Psi$. Assumption P' also bounds the maximum log-likelihood given by the prior to any $\eta$ different than $\eta_0$ to be of the order $\dim_{prior}$ which can grow with $n$. If Assumption P holds we have $\dim_{prior} \leq \dim$. However, if the prior is placed directly on $\Psi$ we typically have $\dim_{prior} = \dim_1$. Finally, if the prior is uninformative we trivially have $\dim_{prior} = 1$. The posterior of $\eta$
given the data is denoted by
$$
\pi_n(\eta) \propto \pi(\theta(\eta)) \cdot \prod_{i=1}^n
f(x_i;\theta(\eta)) = \pi(\theta(\eta)) \cdot \exp\( n \sp{\bar
X}{\theta(\eta)} - n\psi(\theta(\eta))\)
$$ where $\bar X = (1/n)\sum_{i=1}^n X_i$.

Under this framework, we also define the local parameter space to
describe contiguous deviations from the true parameter as
$$
\gamma = \sqrt{n}(\eta - \eta_0),  \ \ \mbox{and let} \ \ s =
(G'FG)^{-1}G'\sqrt{n}(\bar X - \mu)
$$
be a first order approximation to the normalized
maximum liklelihood/ex\-tre\-mum estimate. Under this setting, the following  relations hold
for $s$: $$ E[s]= 0, \ \ E [s s'] = (G'FG)^{-1}, \ \ \mbox{and} \ \ \|s\| =
O_p(\sqrt{\dim_1\|(G'FG)^{-1}\|_{op}}).$$ The posterior density evaluated at $\gamma \in \Gamma := \sqrt{n}(\Psi-\eta_0)$ is given by  $$ \pi^*_n(\gamma) =
\ell(\gamma)/\int_{\Gamma} \ell(\gamma) d \gamma, $$ where
$${\small \begin{array}{rl} \ell(\gamma) & = \exp\( n\sp{\bar
X}{\theta(\eta_0 + n^{-1/2}\gamma)-\theta(\eta_0)}  -n\[
\psi(\theta
(\eta_0 + n^{-1/2}\gamma)) - \psi(\theta (\eta_0))\] \) \\
&\ \ \ \ \ \ \ \ \ \ \ \ \times \pi\(\theta\(\eta_0 + n^{-1/2}\gamma\)\).
\end{array}}$$

In order to formally state our results we use the following additional definition
$$ a_n = \sup \{ c : \lambda_n(c) \leq 1/16 \}. $$
The sequence $a_n$ characterizes a neighborhood of size
$\sqrt{a_n\dim}$ around the true local parameter for which the posterior $\ell(\cdot)$ is bounded above by a proper Gaussian density. This is useful since this neighborhood grows and controlling Gaussian tails is typically easier. In turn, this allows for weaker conditions in the next results. 

Next we address the consistency question for the maximum
likelihood estimator associated with the curved exponential
family.

\begin{theorem}\label{Thm:CurvedConsistency}
Suppose that Assumptions A, B and P' hold. Then, provided the condition $\|(G'FG)^{-1}\|_{op}\{\dim_1+\dim_{prior}\}/d = o(a_n)$ holds, we have that the maximum likelihood estimator $\hat \eta$ satisfies
$$\| \hat \eta - \eta_0 \| = O_p\left(\sqrt{\frac{\dim_1+\dim_{prior}}{n}\|(G'FG)^{-1}\|_{op}}\right). $$
\end{theorem}

Two remarks regarding Theorem \ref{Thm:CurvedConsistency} are
worth mentioning. First, we note that the condition
 $\lambda_n(c)=o(1)$ implies $a_n\to \infty$. However,  $\lambda_n(c)=o(1)$ is stronger than the condition
$\sqrt{d/n}B_{1n}(c)=o(1)$ used for consistency obtained in \cite{G2000} for the exponential family case. Second,  the consistency
result in Theorem \ref{Thm:CurvedConsistency} can be substantially impacted by the choice of prior. If the prior used is defined over the full space, it might place an exponentially small (in the dimension $d$) weight in $\eta_0$ relative to other points. In the case $\dim_1\sim \dim$ this is not problematic but it could impact the rates if $\dim_1 = o(\dim)$. In cases a prior can be placed directly over $\Gamma$ so that $d_{prior} = O(\dim_1)$, we obtain the standard rate of convergence of $\sqrt{\dim_1/n}$.

Finally, we state the asymptotic normality result for the
curved exponential family. In what follows, let $M_n:=\{1+2\|JG\|_{op}\}^2\|(G'FG)^{-1}\|_{op} \{\dim_1+d_{prior}\}$.

\begin{theorem}\label{Thm:Cexpo}
Suppose that Assumptions A, B, and P' hold. Further suppose conditions (i)-(iii) of Theorem \ref{Thm:Main:improv} hold, and for each fixed $c>0$, P($cM_n/d$), $d_1\lambda_n(cM_n/d)=o(1)$ and $\{1+2\|JG\|_{op}\}^2N_n^2/d = o(a_n)$. Then, asymptotic
normality for the posterior density associated with the curved
exponential family holds,
$$ \int | \pi_n^*(\gamma) - \phi_{\dim_1}(\gamma; s,(G'FG)^{-1})| d\gamma=o_p(1).$$
\end{theorem}

\section{Applications to Selected Econometric Models}\label{Sec:Applications}
In this section we verify the conditions that lead to asymptotic normality in a variety of econometric problems covering both exponential and curved exponential families under increasing dimension. Most examples
are motivated by the classical work of \cite{zellner:text} on Bayesian econometrics.

\subsection{Multivariate Linear Model}\label{Sec:MLM}

In this section we consider a multivariate linear model. The response variable $y_i$ is a $d_y$-dimensional vector, the disturbances $u_i$ are normally distributed with mean zero and covariance matrix $\Sigma_0$. The covariates $z_i$ are $d_z$-dimensional and the parameter matrix of interest $\Pi_0$ is $d_z\times d_y$,
\begin{equation}\label{MLM}
y_i = z_i\Pi_0 + u_i \ \ \ i = 1,\ldots,n.
\end{equation} For notational convenience, let $Y$ and $Z$ denote the matrices whose rows are given by $y_i$ and $z_i$ respectively. Note that the dimension of the model is $d = d_y^2 + d_zd_y$.

Conditioning on the covariates $Z$, this model can be cast as an exponential family model by the following parametrization
\begin{equation}\label{Par:MLM}
\theta = \left( \begin{array}{c} \theta_1 \\ \theta_2 \end{array}\right) = \left( \begin{array}{c} -\frac{1}{2} \Sigma^{-1} \\ \Pi \Sigma^{-1} \end{array}\right), \ \ \bar{X} = \frac{1}{n}\left( \begin{array}{c} \bar{X}_{1} \\ \bar{X}_{2} \end{array}\right) = \frac{1}{n}\left( \begin{array}{c} Y'Y \\ Z'Y  \end{array}\right)
\end{equation} and using the (trace) inner product $\sp{\theta}{X} = \trace(X_1'\theta_1) + \trace(X_2'\theta_2)$, see for instance \cite{Garderen}. This parametrization leads to the normalizing function
\begin{equation}\label{Par:MLM2}
 \psi(\theta) = -\frac{1}{4n} \trace(Z\theta_2\theta_1^{-1}\theta_2'Z') - \frac{1}{2} \log \det ( -2\theta_1 ).
\end{equation}
We make the following assumptions on the design. Uniformly in $n$, the covariates satisfy $\max_{i\leq n} \|z_i\| = O(d_z^{1/2})$, the matrices  $Z'Z/n$ and $\Sigma_0$ have eigenvalues bounded away from zero and from above, and the matrix $\Pi_0$ has full rank with singular values also bounded away from zero and from above.

Under these assumptions, by Lemma \ref{lemma:MultivariateLinearModelF} in the Appendix it follows that $\|F^{-1}\|_{op} = O(1)$. 
Also, Lemma \ref{MultivariateLinearModelB1nB2n} in the Appendix bounds the quantities $B_{1n}(c) = O(d_z)$ and $B_{2n}(c) = O(d_z^2)$. Therefore we have asymptotic normality by Theorem \ref{Thm:Main:improv} provided that the condition
$d(d_z\sqrt{d/n}+d_z^2d/n) = o(1)$ holds.

\subsection{Seemingly Unrelated Regression Equations}\label{Sec:SUR}

The seemingly unrelated regression model (\cite{Zellner}) considers a collection of models
\begin{equation}\label{SUR}
y_{ik} = x_{ik}'\beta_k + u_{ik}, \ \ \ k=1,\ldots,d_y, \ \ i=1,\ldots,n,
\end{equation} where the dimension of $\beta_k$ is $d_k$. Let $d_z$ denote the total number of distinct covariates. The $d_y$-dimensional vector of disturbances $u$ has zero mean and covariance $\Sigma_0$ where $c<\lambda_{min}(\Sigma_0)\leq \lambda_{max}(\Sigma_0)\leq C$ for some fixed constants $c>0$ and $C<\infty$ independent of $n$. This model can be written in the form of (\ref{MLM}) by setting $\Pi = [ \pi_1(\beta_1) ; \pi_2(\beta_2) ; \cdots ; \pi_{d_y}(\beta_{d_y}) ]$. Note that the vector $ \pi_k(\beta_k)$ has zeros for regressors that do not appear in the $k$th model.
\cite{Garderen} shows that this model is a curved exponential model provided that the matrix $\Pi$ has some zero restrictions.

Consider the assumptions of Section \ref{MLM}. In this case we have that
\begin{equation}\label{SUR:c}
\eta = \left( \begin{array}{c} \eta_1 \\ \eta_2 \end{array}\right)= \left( \begin{array}{c} \Sigma^{-1} \\ \Pi \end{array}\right), \ \ \theta(\eta) =  \left( \begin{array}{c} -\frac{1}{2}\eta_1 \\ \eta_2\eta_1 \end{array}\right)= \left( \begin{array}{c} -\frac{1}{2} \Sigma^{-1} \\ \Pi \Sigma^{-1} \end{array}\right).
\end{equation} 
We restrict the space of $\Sigma$ to consider $\lambda_{min}(\Sigma) > \lambda_{min}$ a fixed constant (note that this induces $\lambda_{max}(\Sigma^{-1}) < 1/ \lambda_{min}$ which leads to a convex region in the parameter space), and that operator norm of $\Pi$ is bounded by a constant, $\|\Pi\|_{op}\leq M$ a fixed constant.

The mapping $\theta(\cdot)$ is twice differentiable and Lemma \ref{Lemma:SURtheta} establishes that condition (\ref{Curved:Rep}) holds with $R_{2n} = 0$ and $\|R_{1n}\| \leq O(N_n^2/\sqrt{n})$. Provided $\|G\|_{op}\leq C$, this implies that the requirement of $d^3\log^3 d = O(\{d_y^6 + d_z^3d_y^3\}\log^3(d_y+d_z))=o(n)$ suffices for Assumption A to hold.

Next we verify Assumption $B$ for $\varepsilon_0 = \min\{\frac{1}{4},\lambda_{min}(\Sigma_0^{-1}) / [8(1+M)]\}$. Direct calculations provide$$
\begin{array}{rcl}
\|\theta(\eta) - \theta(\eta_0)\| & \geq & \max \{ \frac{1}{2}\| \eta_1 - \eta_{01} \|,  \|\eta_2\eta_1 - \eta_{02}\eta_{01}\| \}. \\
\end{array}
$$We can assume that $\|\eta_1 - \eta_{01}\| <  2\varepsilon_0 \|\eta - \eta_0\|$ otherwise Assumption B holds. In turn, this implies that $\|\eta_2 - \eta_{02}\| \geq \frac{1}{2}\|\eta - \eta_0\|$. In this case, since the operator norm of $\eta_2$ satisfies $\|\eta_2\|_{op}\leq M$, we have
$$
\begin{array}{rcl}
\|\theta(\eta) - \theta(\eta_0)\| & \geq & \|\eta_2\eta_1 - \eta_{02}\eta_{01}\| \\
& = &  \| \eta_2 ( \eta_1 - \eta_{01}) + ( \eta_{2} - \eta_{02})\eta_{01}\| \\ %
& \geq & \| \eta_{2} - \eta_{02} \| \lambda_{min}(\Sigma_0^{-1}) - \|\eta_2\|_{op}\| \eta_1 - \eta_{01} \| \\
& \geq & \| \eta - \eta_{0} \| \lambda_{min}(\Sigma_0^{-1})/2 - M2\varepsilon_0\|\eta - \eta_{0} \|\\
& \geq & \| \eta - \eta_{0} \| \lambda_{min}(\Sigma_0^{-1})/4 \geq \varepsilon_0 \| \eta - \eta_{0} \| \\
\end{array}
$$ which verifies Assumption B.

\subsection{Single Structural Equation Model}\label{Sec:SSEM}

Next we consider the single structural equation,
\begin{equation}\label{SSEM}
y^{(1)}_i = {y^{(2)}_i}'\beta + {z^{(1)}_i}'\gamma + v_i, \ \ i=1,\ldots,n,
\end{equation}
for which the associated reduced form system, given by the multivariate linear model in (\ref{MLM}), can be partitioned as
\begin{equation}\label{SSEM-red}
(y^{(1)}_i \ \  {y^{(2)}_i}' ) = ({z^{(1)}_i}' \ \ {z^{(2)}_i}') \left( \begin{array}{cc} \pi_{11} & \pi_{12} \\ \pi_{21} & \pi_{22}\\ \end{array}\right) + ( u^{(1)}_i  \ \  {u^{(2)}_i}' ).
\end{equation}
We assume full column rank of $Z$ and $\rank(\pi_{21} \ \ \pi_{22}) = \rank(\pi_{22}) = d_y-1$ where $d_y$ is the dimension of $(y^{(1)}_i \ \ {y^{(2)}_i}')$.  The compatibility between the models (\ref{SSEM}) and (\ref{SSEM-red}) requires that
$$ \pi_{11} = \pi_{12}\beta + \gamma, \ \ \pi_{21} = \pi_{22} \beta, \ \ \mbox{and} \ \ u^{(1)}_i = {u^{(2)}_i}'\beta + v_i. $$

The model can also be embedded in (\ref{MLM}) as follows
\begin{equation}\label{SUR:c}
\eta = \left( \begin{array}{c} \eta_1 \\ \eta_2 \\ \eta_3 \end{array}\right)= \left( \begin{array}{c} \Sigma^{-1} \\ \left( \begin{array}{c} \pi_{12} \\ \pi_{22} \end{array}\right) \\ \left( \begin{array}{c} \gamma\\ \beta \end{array}\right) \\ \end{array}\right), \ \ \theta(\eta) = \left( \begin{array}{c} -\frac{1}{2} \Sigma^{-1} \\ \left( \begin{array}{cc} \gamma + \pi_{12}\beta & \pi_{12} \\ \pi_{22}\beta & \pi_{22}\\ \end{array}\right) \Sigma^{-1} \end{array}\right).
\end{equation}

Similar arguments to those used in Section \ref{Sec:SUR} show that Assumptions A and B hold
under the standard strong instrument asymptotics.

\subsection{Multinomial Model}\label{Sec:MM}

This example of multinomial model was also analyzed in \cite{G2000}. Our goal is to weaken some of the conditions required previously using the techniques proposed here.

Let $\mathcal{X} =
\{ x^0, x^1,\ldots, x^\dim\}$ be the known finite support of a
multinomial random variable $X$ where $d$ is allowed to grow with
sample size $n$. For each $j$ denote by $p_j$ the probability of
the event $\{ X = x^j \}$ which is assumed to satisfy $\max_{0\leq j\leq \dim}
1/p_j = O(\dim)$. The parameter space is given by $\theta =
(\theta_1,\ldots,\theta_\dim)$ where $\theta_j =
\log(p_j/(1-\sum_{k=1}^{\dim}p_k))$. It follows that under the assumption on the
$p_j$'s the true value of $\theta_j$'s are bounded. The Fisher
information matrix is given by $F = P - pp'$ where $P={\rm
diag}(p)$. In this case we have $B_{1n}(c)
= O(\dim^{3/2})$ and $B_{2n}(c) = O(\dim^2)$. We refer to \cite{G2000} for detailed calculations.

The growth condition $\dim^6(\log \dim)/n \to 0$ was imposed
in \cite{G2000} to obtain the asymptotic normality results (the case of
$\alpha=0$). We weaken this growth requirement by combining
the derivation in \cite{G2000} with the analysis in Section \ref{Sec:Exp}  with an uninformative
(improper) prior. In this case we have $K_n(c) = 0$ and our
definition of $\lambda_n(c)$ remove the logarithmic factors.
As a result, Theorem \ref{Thm:Main:improv} leads to a weaker growth
condition $\dim^4/n\to 0$. For
$\alpha$-moment estimation, the conditions of Theorem \ref{Thm:Main:Alpha} are
satisfied with the condition that $\dim^{4+\alpha+\delta}/n \to 0$
for any strictly positive value of $\delta$. Recently another approach based on Le Cam's proof that is specific to discrete probability distributions allows for further improvements, see \cite{BoucheronGassiat2009}. 

\subsection{Multinomial Model with Moment Restrictions}\label{Sec:MMMR}

In this subsection we provide a high-level discussion of the
multinomial model with moment restrictions. Let $\XX = \{ x^0,
x^1, x^2, \ldots, x^d\}$ be the known finite support of a
multinomial random variable $X$ which was described in Section
\ref{Sec:MM}. Conditions $(i)-(iv)$ are verified as in Section \ref{Sec:MM}.

As discussed in the introduction, it is of interest to incorporate
moment restrictions into this model, see \cite{chamberlain} and \cite{imbens} for discussions. This will lead to a curved exponential model as
studied in Section \ref{Sec:Curved}.

The parameter of interest is $\eta \in \Psi \subset \RR^{\dim_1}$ a
compact set. Consider a (twice continuously differentiable)
vector-valued moment function $m:\XX \times \Psi \to \RR^M$ such
that $$E[m(X,\eta)] = 0\ \ \mbox{for a unique} \ \eta_0 \in
\Psi.$$ The log-likelihood function
associated with this model
\begin{equation}\label{Def:MMlog}
 \begin{array}{l}
 \displaystyle l(q,\eta) = \sum_{i=1}^n \sum_{j =0}^d I\{X_i = x_j\} \ln
q_j \\
 \displaystyle  \mbox{for some} \ q \ \mbox{and}\ \eta \ \mbox{such that} \
\sum_{j=0}^d q_j m(x_j,\eta) = 0, \ \sum_{j=0}^d q_j = 1, \ q\geq 0,
\end{array}
\end{equation} and $l(q,\eta)=-\infty$ if the probability distribution $q$ violates any of the
 moments conditions. The log-likelihood function (\ref{Def:MMlog}) induces the mapping $q:\Psi \to \Delta^{d-1}$ formally defined as
\begin{equation}\label{Def:ThetaMM}
 \begin{array}{rl}
 \displaystyle q(\eta) =  \arg\max_{q}&  l(q,\eta) \\
&  \displaystyle \sum_{j=0}^d q_j m(x_j,\eta) = 0, \ \sum_{j=0}^d
q_j = 1, \ q \geq 0.
\end{array}
\end{equation}

In this case, the function $\theta_j(\eta)
= \log( q_j(\eta) / q_0(\eta))$ (for $j=1,\ldots,d$) is the
mapping from $\Psi \to \Theta$ discussed in Section \ref{Sec:Curved}. Assuming that
the matrix $E\[ m(X,\eta)m(X,\eta)'\]$ is uniformly positive
definite over $\eta$, \cite{Qin-Lawless} use the
inverse function theorem to show that $\theta(\cdot)$ is a twice
continuous differentiable mapping of $\eta$ in a neighborhood of
$\eta_0$. In particular this implies that we can take $R_{2n}=0$ and $\|R_{1n}\| = O( \dim \dim_1^2 (N_n^2/n) )$.
Thus, provided $G'FG$ has eigenvalues bounded away from zero and from above, $d_{prior}\leq d$, and because $\|F^{-1}\|_{op}=O(\dim)$, Assumption A holds provided that $\|R_{1n}\| N_n = O(\dim^3\dim_1^3\log \dim /n) = o(1)$. 

Assumption B is satisfied if $\eta$
belongs in a compact set $\Psi$ and that the mapping $\theta(\cdot)$ is
injective (over a set that contains $\Psi$ in its interior). We
refer to \cite{Newey-McFadden} for a discussion
of primitive assumptions for identification with moment
restrictions.

\section*{Appendix A: Technical Results}\label{Sec:Tech}
\renewcommand{\theequation}{A.\arabic{equation}}
\renewcommand{\thesection}{A}
\setcounter{equation}{0}
\renewcommand{\theproposition}{A.\arabic{proposition}}
\renewcommand{\thelemma}{A.\arabic{lemma}}
\medskip

In this section we prove the technical lemmas needed to prove our
main result in the following section. Our exposition follows the
work of \cite{G2000}. For the sake of completeness we
include Proposition \ref{TaylorExp}, which can be found in
\cite{Portnoy}, and a specialized version of Lemma 1 of \cite{G2000}. All the remaining proofs use different techniques
and rely on weaker assumptions. In
particular, we no longer require the prior to be proper, no bounds
on the growth of $\det \( \psi''(\theta_0) \)$ are imposed, and
$\ln n$ and $\ln d$ do not need to be of the same order.

Using the notation in Section \ref{Sec:Exp}, let \begin{equation}\label{SetH}H(a) = \{ u \in \mathcal{U} : \|u\| \leq
a \}.\end{equation} Moreover, for $u \in \mathcal{U}$ let
\begin{equation}\label{Def:tZ} \tilde{Z}_n(u) = \exp \left(
\sp{u}{\Delta_n} - \|u\|^2/2 \right) \ \ \ \mbox{and
}\end{equation} \begin{equation}\label{Def:Z} Z_n(u) = \exp\left(
\frac{1}{\sqrt{n}}\sp{\sum_{i=1}^nX_i}{ J^{-1} u} - n\[\psi\(
\theta_0+n^{-1/2}J^{-1}u\) - \psi(\theta_0)\] \right),
\end{equation} otherwise, if $\theta_0+n^{-1/2} J^{-1}u \notin
\Theta$, let $Z_n(u) = \tilde Z_n(u) = 0$. The quantity
(\ref{Def:Z}) denotes the likelihood ratio associated with $f$ as
a function of $u$. In a parallel manner, (\ref{Def:tZ}) is
associated with a standard Gaussian density. We note that (\ref{Def:tZ}) and (\ref{Def:Z}) are logconcave functions in $u$.

We start recalling a result on the Taylor expansion of $\psi$
which is key to control deviations between $\tilde Z(u)$ and
$Z(u)$.

\begin{proposition}[\cite{Portnoy}]\label{TaylorExp}
Let $\psi'$ and $\psi''$ denote respectively the gradient and the
Hessian of $\psi$. For any $\theta, \theta_0 \in \Theta$, there
exists $\tilde \theta = \lambda \theta + (1-\lambda)\theta_0$, for
some $\lambda \in [0,1]$, such that
\begin{equation}\label{Eq:Taylor}
\begin{array}{rl}
\psi(\theta) &= \psi(\theta_0) + \sp{\psi'(\theta_0)}{\theta -
\theta_0} + \frac{1}{2}\sp{\theta-\theta_0}{\psi''(\theta_0) (\theta- \theta_0)} +  \\
& +  \frac{1}{6} E_{\theta_0}[ \sp{\theta - \theta_0}{W}^3] +  \frac{1}{24} \left\{ E_{\tilde \theta}[\sp{\theta-\theta_0}{W}^4 ] - 3 \( E_{\tilde\theta}[ \sp{\theta-\theta_0}{W}^2]\)^2  \right\} \\
\end{array}
\end{equation} where $W=U-E[U]$ with $U \sim f(\cdot;\theta)$ in $E_{\theta}\[g(W)\]$.
\end{proposition}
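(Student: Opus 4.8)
The plan is to reduce the multivariate expansion to a one‑dimensional Taylor expansion along the segment joining $\theta_0$ to $\theta$, and to identify the successive derivatives of $\psi$ with the cumulants of the exponential family. Concretely, set $v = \theta - \theta_0$ and define $g(t) = \psi(\theta_0 + t v)$ for $t \in [0,1]$. Since $\Theta$ is open and convex and contains both $\theta_0$ and $\theta$, the whole segment lies in $\Theta$, so $g$ is smooth on a neighbourhood of $[0,1]$, and Taylor's theorem with the Lagrange form of the remainder gives, for some $\lambda \in [0,1]$,
$$
g(1) = g(0) + g'(0) + \tfrac{1}{2} g''(0) + \tfrac{1}{6} g'''(0) + \tfrac{1}{24} g^{(4)}(\lambda).
$$
It then remains only to evaluate these four derivatives.

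Next I would differentiate the identity $e^{\psi(\theta)} = \int e^{\sp{x}{\theta}}\,dx$ repeatedly under the integral sign, which is legitimate at every point of the closed segment since each such point is interior to $\Theta$, where the moment generating function of $U \sim f(\cdot;\theta)$ is finite on a neighbourhood. This produces the standard cumulant identities: writing $W = U - E_\theta[U]$, one has $\psi'(\theta) = E_\theta[U]$, hence $g'(0) = \sp{\psi'(\theta_0)}{v}$; $\psi''(\theta) = \cov_\theta(U)$, hence $g''(0) = \sp{v}{\psi''(\theta_0)v} = E_{\theta_0}[\sp{v}{W}^2]$; the third directional derivative is the third central moment, $g'''(0) = E_{\theta_0}[\sp{v}{W}^3]$; and the fourth directional derivative is the fourth cumulant
$$
g^{(4)}(t) = E_{\theta_t}\!\left[\sp{v}{W}^4\right] - 3\left(E_{\theta_t}\!\left[\sp{v}{W}^2\right]\right)^2,
\qquad \theta_t = \theta_0 + t v .
$$
Substituting these into the Taylor formula, with the remainder evaluated at $\tilde\theta = \theta_0 + \lambda v = \lambda\theta + (1-\lambda)\theta_0$, reproduces exactly the claimed identity.

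The only nonroutine ingredient is the combinatorial step that turns $\frac{d^4}{dt^4}\log\int e^{\sp{x}{\theta_t}}\,dx$ into the fourth‑cumulant expression with the Isserlis‑type correction $-3(\,\cdot\,)^2$; this is the usual identity relating derivatives of a cumulant generating function to cumulants and moments, and carries no analytic difficulty once differentiation under the integral has been justified. Since the statement is quoted verbatim from Portnoy \cite{Portnoy}, one may alternatively simply invoke that reference; the outline above records why it holds and is self‑contained modulo these standard facts.
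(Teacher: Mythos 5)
Your argument is correct and is exactly the standard proof that the paper omits: the paper states this proposition without proof, deferring entirely to Portnoy \cite{Portnoy}, and your reduction to a fourth-order one-dimensional Taylor expansion of $g(t)=\psi(\theta_0+t(\theta-\theta_0))$ with Lagrange remainder, combined with the cumulant identities $g'''(t)=E_{\theta_t}[\sp{v}{W}^3]$ and $g^{(4)}(t)=E_{\theta_t}[\sp{v}{W}^4]-3\bigl(E_{\theta_t}[\sp{v}{W}^2]\bigr)^2$, is precisely the intended derivation. The only hypotheses you rely on (openness and convexity of $\Theta$, smoothness of $\psi$ on the interior of the natural parameter space, differentiation under the integral) are all available in the paper's setup, so the proof is complete.
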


Based on Proposition \ref{TaylorExp} we control the pointwise
deviation between $Z_n$ and $\tilde{Z}_n$ in a neighborhood of
zero (i.e., in a neighborhood of the true parameter).
\begin{lemma}[Essentially in \cite{G2000} and \cite{Portnoy}]\label{Lemma1:old}
For all $u$ such that $\|u\| \leq \sqrt{c\dim}$, we have $$ | \ln
Z_n(u) - \ln \tilde Z_n(u) | \leq \lambda_n(c)\|u\|^2 \ \
\mbox{and} \ \ \ln Z_n(u) \leq \sp{\Delta_n}{u} - \frac{1}{2}
\|u\|^2 (1- 2\lambda_n(c)).$$
\end{lemma}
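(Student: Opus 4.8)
The plan is to apply the Taylor expansion of Proposition \ref{TaylorExp} to $\psi$ at the point $\theta = \theta_0 + n^{-1/2}J^{-1}u$, and track the remainder terms. First I would substitute $\theta - \theta_0 = n^{-1/2}J^{-1}u$ into \eqref{Eq:Taylor}. The zeroth, first, and second order terms are designed to match exactly: the constant cancels, the linear term $\sp{\psi'(\theta_0)}{n^{-1/2}J^{-1}u} = \sp{\mu}{n^{-1/2}J^{-1}u}$ combines with $\frac{1}{\sqrt n}\sp{\sum X_i}{J^{-1}u}$ in $\ln Z_n(u)$ to produce $\sp{\Delta_n}{u}$ (using $\Delta_n = \sqrt n J^{-1}(\bar X - \mu)$ and $F = JJ^T$), and the quadratic term $\frac12 \sp{n^{-1/2}J^{-1}u}{\psi''(\theta_0)\,n^{-1/2}J^{-1}u} = \frac{1}{2n}\sp{u}{J^{-1}FJ^{-T}u} = \frac{1}{2n}\|u\|^2$; multiplied by $-n$ inside $\ln Z_n$ this gives exactly $-\frac12\|u\|^2$, matching $\ln \tilde Z_n(u)$. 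Hence $\ln Z_n(u) - \ln \tilde Z_n(u)$ equals $-n$ times the sum of the third- and fourth-order remainder terms in \eqref{Eq:Taylor}.

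Next I would bound these two remainder terms. Writing $w = n^{-1/2}J^{-1}u$, the third-order term is $\frac16 E_{\theta_0}[\sp{w}{W}^3]$ and $n$ times it is $\frac{n}{6}E_{\theta_0}[\sp{n^{-1/2}J^{-1}u}{W}^3] = \frac{1}{6\sqrt n}E_{\theta_0}[\sp{J^{-1}u}{W}^3]$. Factoring $\|J^{-1}u\|^3 = \|J^{-1}u\|^3$ and noting $E_{\theta_0}[\sp{a}{J^{-1}(U-\mu)}^3] \le $ ... wait, here $W = U - E_{\theta_0}[U]$, so $\sp{J^{-1}u}{W}$; writing $a = J^{-1}u/\|J^{-1}u\|$ and $V = $ ... actually one recognizes $\sp{a}{W}$ with $W$ having covariance $F$, so $E[\sp{a}{W}^3]$ relates to $B_{1n}$ after normalizing by $J$. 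The cleaner route: $\sp{J^{-1}u}{W} = \sp{u}{J^{-T}W} = \sp{u}{V'}$ where $V' = J^{-1}W$ has identity covariance under $\theta_0$; so the term is bounded by $\frac{1}{6\sqrt n}\|u\|^3 B_{1n}(0)$. Since $\|u\| \le \sqrt{c\dim}$, this is at most $\frac16 \sqrt{c\dim/n}\,B_{1n}(0)\|u\|^2$. For the fourth-order term, the centering $\tilde\theta$ lies on the segment between $\theta_0$ and $\theta$, so $\|J(\tilde\theta - \theta_0)\|^2 \le \|J(\theta-\theta_0)\|^2 = \|u\|^2/n \le c\dim/n$, hence $\tilde\theta$ is in the ball defining $B_{2n}(c)$; bounding $\{E_{\tilde\theta}[\sp{w}{W}^4] - 3(E_{\tilde\theta}[\sp{w}{W}^2])^2\}$ by $E_{\tilde\theta}[\sp{w}{W}^4] \le (\|u\|^2/n) \cdot $ ... after the same change of variables this contributes at most $\frac{1}{24}\cdot\frac{\|u\|^4}{n^2}\cdot n \cdot $ ... giving a bound of order $\frac16\frac{c\dim}{n}B_{2n}(c)\|u\|^2$ when $\|u\|^2 \le c\dim$. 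Summing the two contributions yields exactly $\lambda_n(c)\|u\|^2$ as defined in \eqref{Def:lambda}, which proves the first inequality.

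Finally, the second inequality follows immediately from the first: $\ln Z_n(u) \le \ln \tilde Z_n(u) + \lambda_n(c)\|u\|^2 = \sp{\Delta_n}{u} - \frac12\|u\|^2 + \lambda_n(c)\|u\|^2 = \sp{\Delta_n}{u} - \frac12\|u\|^2(1 - 2\lambda_n(c))$. The main obstacle I anticipate is bookkeeping care in the change of variables: making sure the factors of $\sqrt n$, the $J^{-1}$ versus $J^{-T}$, and the definition of $W$ as a \emph{centered} variable under the correct parameter ($\theta_0$ for the cubic term, $\tilde\theta$ for the quartic term) all line up so that the normalized variable $V$ in the definitions \eqref{Def:B1n}--\eqref{Def:B2n} is exactly what appears, and that dropping the $-3(E[\cdot]^2)^2$ term (which only helps) is justified in sign. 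Everything else is routine arithmetic matching the definition of $\lambda_n(c)$.
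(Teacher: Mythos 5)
Your plan matches the paper's proof of Lemma \ref{Lemma1:old} essentially line for line: apply Proposition \ref{TaylorExp} at $\theta=\theta_0+n^{-1/2}J^{-1}u$, observe that the constant, linear and quadratic terms cancel against $\ln\tilde Z_n(u)$, bound the cubic remainder by $\frac{1}{6}n^{-1/2}\|u\|^3B_{1n}(0)$ and the quartic remainder by the $B_{2n}(c)$ term (the segment point $\tilde\theta$ lying in the ball defining $B_{2n}(c)$), and use $\|u\|\le\sqrt{c\dim}$ to collect everything into $\lambda_n(c)\|u\|^2$, the second inequality being immediate. The only small imprecision is your remark that the $-3\bigl(E_{\tilde\theta}[\,\cdot\,^2]\bigr)^2$ term ``only helps'': it sits inside an absolute value, so it cannot simply be dropped; one instead uses $\bigl(E[X^2]\bigr)^2\le E[X^4]$ to bound the whole bracket by a constant multiple of the fourth moment, which is precisely why $\lambda_n(c)$ carries the factor $\tfrac{1}{6}=\tfrac{4}{24}$ on the $B_{2n}$ term rather than $\tfrac{1}{24}$.
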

\textbf{Proof:} Define $I := \ln \tilde Z_n(u) - \ln Z_n(u)$ so that $$I = -n^{1/2}\sp{\mu}{J^{-1}u}-\frac{1}{2}\|u\|^2+
n\left\{\psi(\theta_0 + n^{-1/2}J^{-1}u) - \psi(\theta_0)\right\}.$$ Using
Proposition \ref{TaylorExp}, for some $\tilde \theta$ in the line segment $[\theta_0, \theta_0+n^{-1/2}J^{-1}u]$ we have
 $$
\begin{array}{rcl}
|I| & \leq & n\left| \frac{1}{6} E_{\theta_0}\left[\sp{\frac{u}{n^{1/2}}}{V}^3 \right] \right| + \frac{1}{24} \left|\left\{E_{\tilde \theta}\left[ \sp{\frac{u}{n^{1/2}}}{V}^4 - 3 \left(
E_{\tilde \theta}\left[\sp{\frac{u}{n^{1/2}}}{V}^2 \right]
\right)^2
\right] \right\} \right| \\
&\leq & \frac{1}{6} \left( n^{-1/2}\|u\|^3 B_{1n}(0) +
n^{-1}\|u\|^4B_{2n}(c) \right) \leq \lambda_n(c) \|u\|^2\\
\end{array}
$$where the last inequality holds by $\|u\|\leq \sqrt{cd}$ and the definition of $\lambda_n(c)$. The second statement of the lemma follows directly from the first result. $\square$ 

Next we show how to bound the integrated deviation between the
quantities in (\ref{Def:tZ}) and (\ref{Def:Z}) restricted to the
neighborhood of zero.

\begin{lemma}\label{Lemma3:improv}
For any $c > 0$ we have
$$ \left( \int \tilde Z_n (u) du \right)^{-1} \int_{\{u : \|u\|\leq \sqrt{c\dim}\}} | Z_n(u) - \tilde Z_n(u) | du \leq c\dim \lambda_n(c) e^{2c\dim\lambda_n(c)}.$$
\end{lemma}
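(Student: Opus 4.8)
The plan is to deduce the integrated bound from the pointwise estimate of Lemma \ref{Lemma1:old} combined with the elementary inequality $|e^{t}-1|\le |t|\,e^{|t|}$, which holds for every real $t$ (for $t\ge 0$ write $e^{t}-1=\int_0^{t}e^{s}\,ds\le t e^{t}$, and for $t<0$ write $1-e^{t}=\int_{t}^{0}e^{s}\,ds\le |t|$).

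First I would record the trivial algebraic identity
$$
|Z_n(u)-\tilde Z_n(u)| \;=\; \tilde Z_n(u)\,\bigl|\,e^{\ln Z_n(u)-\ln\tilde Z_n(u)}-1\,\bigr|,
$$
valid wherever $\theta_0+n^{-1/2}J^{-1}u\in\Theta$, while for $u$ outside this set both $Z_n(u)$ and $\tilde Z_n(u)$ vanish so the difference is zero and every bound below is vacuously true. Setting $t=\ln Z_n(u)-\ln\tilde Z_n(u)$ and applying $|e^{t}-1|\le|t|e^{|t|}$ gives
$$
|Z_n(u)-\tilde Z_n(u)| \;\le\; \tilde Z_n(u)\,\bigl|\ln Z_n(u)-\ln\tilde Z_n(u)\bigr|\,e^{\,|\ln Z_n(u)-\ln\tilde Z_n(u)|}.
$$

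Next, on the ball $\{\|u\|\le\sqrt{c\dim}\}$ Lemma \ref{Lemma1:old} yields $|\ln Z_n(u)-\ln\tilde Z_n(u)|\le \lambda_n(c)\|u\|^2\le c\dim\,\lambda_n(c)$; substituting this bound for both the linear factor and the exponent in the previous display produces the uniform pointwise estimate
$$
|Z_n(u)-\tilde Z_n(u)| \;\le\; c\dim\,\lambda_n(c)\,e^{\,c\dim\lambda_n(c)}\;\tilde Z_n(u),\qquad \|u\|\le\sqrt{c\dim}.
$$
Finally I would integrate this over $\{\|u\|\le\sqrt{c\dim}\}$ and bound $\int_{\{\|u\|\le\sqrt{c\dim}\}}\tilde Z_n(u)\,du\le\int_{\RR^{\dim}}\tilde Z_n(u)\,du$, then divide through by $\int\tilde Z_n(u)\,du$, which is legitimate because $\tilde Z_n$ is (up to a constant) a Gaussian density, so $\int\tilde Z_n=(2\pi)^{\dim/2}e^{\|\Delta_n\|^2/2}$ is strictly positive and finite. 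This gives precisely the asserted inequality. There is no genuine obstacle in this lemma; the only points deserving a word of care are that the difference $Z_n-\tilde Z_n$ vanishes off $\Theta$ (so the pointwise bound is applied only where Lemma \ref{Lemma1:old} is available) and the finiteness and positivity of $\int\tilde Z_n$ needed for the final division.
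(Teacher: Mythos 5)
Your proof is correct and follows essentially the same route as the paper's: both arguments combine the pointwise estimate $|\ln Z_n(u)-\ln\tilde Z_n(u)|\le\lambda_n(c)\|u\|^2$ from Lemma \ref{Lemma1:old} with an elementary exponential mean-value inequality (the paper uses $|e^x-e^y|\le|x-y|\max\{e^x,e^y\}$ where you use $|e^t-1|\le|t|e^{|t|}$ after factoring out $\tilde Z_n$, which is the same bound in disguise), then integrate over the ball and compare with $\int\tilde Z_n$. The only cosmetic difference is that you absorb the factor $e^{\lambda_n(c)\|u\|^2}\le e^{c\dim\lambda_n(c)}$ pointwise before integrating, whereas the paper does so under the integral sign; your attention to where $Z_n=\tilde Z_n=0$ is a welcome extra care.
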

\textbf{Proof:} Using  $|e^x - e^y| \leq |x - y | \max \{ e^x, e^y\}$ and both relations in Lemma
\ref{Lemma1:old}, for any $\|u\| \leq \sqrt{c\dim}$, we have
$$
\begin{array}{rcl}
|Z_n(u) - \tilde Z_n(u)| & \leq & | \ln Z_n(u) - \ln \tilde Z_n(u)
| \exp\left( \sp{\Delta_n}{u} - \frac{1}{2} (1-2\lambda_n(c))\|u\|^2 \right) \\
&\leq & \lambda_n(c) \|u\|^2 \exp\left( \sp{\Delta_n}{u} - \frac{1}{2} (1-2\lambda_n(c))\|u\|^2 \right). \\
\end{array}
$$
By integrating over the set $H(\sqrt{c\dim})$ as defined in (\ref{SetH}), we obtain
 $$
\begin{array}{lll}
& & \displaystyle \int_{H(\sqrt{c\dim})} | Z_n(u) - \tilde Z_n(u) | du \\
& & \leq \displaystyle
\int_{H(\sqrt{c\dim})} \lambda_n(c) \|u\|^2 \exp\left(
\sp{\Delta_n}{u} - \frac{1}{2} (1-2\lambda_n(c))\|u\|^2 \right)du \\
& & \leq \displaystyle  c\dim\lambda_n(c) \int_{H(\sqrt{c\dim})} \exp\left(
\sp{\Delta_n}{u} - \frac{1}{2} (1-2\lambda_n(c))\|u\|^2 \right)du \\
& & \leq \displaystyle  c\dim\lambda_n(c) e^{2c\dim\lambda_n(c)}
\int_{H(\sqrt{c\dim})}
\exp\left( \sp{\Delta_n}{u} - \frac{1}{2} \|u\|^2 \right)du \\
& & \leq c\dim\lambda_n(c) e^{2c\dim\lambda_n(c)} \int \tilde Z_n(u) du.\square
\end{array}
$$


The next lemma controls the tail of $Z_n$ relatively to $\tilde
Z_n$. The proof relies on the following concentration
inequality for logconcave densities functions developed in \cite{LV01}.
\begin{lemma}[Lemma 5.16 in Lov\'asz and Vempala (2007)]
Let X be a random point drawn from a distribution with a logconcave
density function $f: \mathbb{R}^m\to \mathbb{R}_+$. If $\beta \geq 2$, then
$$P\(f(X) \leq e^{-\beta (m-1)}\max_{x}f(x) \) \leq  (e^{1-\beta}\beta)^{m-1}.$$
\end{lemma}

The lemma is stated with a given
bound on the norm of $\Delta_n$ which is allowed to grow with the
dimension. Such bound on $\Delta_n$ can be easily obtained with
probability arbitrary close to one by standard arguments as $d$ grows.

\begin{lemma}\label{Lemma4:improv}
Suppose that $\|\Delta_n\|^2 < C_1 \dim$ and $\lambda_n(c) <1/8$ for some $c > 16[ 4C_1 \vee 5]$.
Then for every $k\geq 1$ we have
\begin{eqnarray*} && \int_{H^c(k\sqrt{cd})} \pi(\theta_0 + n^{-1/2}J^{-1}u)Z_n(u)du \leq
 \ess \sup_{\theta\in\Theta} \pi(\theta)  \left( e^{c\dim\lambda_n(c)} \int
\tilde Z_n(u) du \right) e^{-kc\dim/16}.
 \end{eqnarray*}
\end{lemma}
\textbf{Proof:} By definition of $c\geq C_1$, we have $\Delta_n \in H(\sqrt{c\dim})$. For any $u \in H^c(k\sqrt{c\dim})$ define $\tilde u = \sqrt{c\dim}u/\|u\|\in H(\sqrt{c\dim})$. Since $Z_n$ is a logconcave functions we have
\begin{equation}\label{Eq:tech01}
\begin{array}{rl}
\log Z_n(u) & \leq \log Z_n(\tilde u) + \nabla[\log(Z_n(\tilde u))]'(u-\tilde u)\\
& = \log Z_n(\tilde u) + (\|u\|-\sqrt{c\dim})\nabla[\log(Z_n(\tilde u))]'u/\|u\|.
\end{array}\end{equation} Next, by Lemma \ref{Lemma1:old} with $\tilde  u \in H(\sqrt{c\dim})$
\begin{equation}\label{Eq:tech02}
\begin{array}{rl}
\log Z_n(\tilde u) & \leq \log \tilde Z_n(\tilde u) + \lambda_n(c)\|\tilde u\|^2\\
& = \Delta_n'\tilde u - \frac{1}{2}(1-2\lambda_n(c))\|\tilde u\|^2\\
&\leq \sqrt{C_1\dim}\sqrt{c\dim}- \frac{1}{2}(1-2\lambda_n(c))cd\\
& =-cd( 1/2 - \lambda_n(c) - \sqrt{C_1/c}) = -cd\varphi \end{array}\end{equation}
where $\varphi :=( 1/2 - \lambda_n(c) - \sqrt{C_1/c}) \geq (1/2) - (1/8) - (1/8) = 1/4$.

Using the logconcavity of $Z_n$, (\ref{Eq:tech02}), and Lemma \ref{Lemma1:old} again we have
$$
\begin{array}{rl}
0 = \log Z_n(0) & \leq \log Z_n(\tilde u) + \nabla[\log(Z_n(\tilde u))]'(0-\tilde u)\\
& \leq -cd\varphi - \sqrt{c\dim} \nabla[\log(Z_n(\tilde u))]'u/\|u\|
\end{array}$$
so that
 \begin{equation}\label{Eq:tech03}\nabla[\log(Z_n(\tilde u))]'u/\|u\|\leq -\sqrt{c\dim}\varphi.\end{equation}

Using (\ref{Eq:tech02}) and (\ref{Eq:tech03}) in the bound (\ref{Eq:tech01}), since $\|u\|-\sqrt{c\dim} \geq (k-1)\sqrt{c\dim}$, we have for any $u\in H^c(k\sqrt{c\dim})$
$$\log Z_n(u) \leq -c\dim \varphi - \sqrt{c\dim}\varphi(k-1)\sqrt{c\dim} \leq -c\dim k\varphi.$$

Thus we can apply Lemma 5.16 in \cite{LV01} with $\beta:=\frac{\dim}{\dim-1}ck\varphi\geq ck/4 \geq 20$ and $M_f \geq Z_n(0) = 1$,  and we obtain
\begin{equation}\label{Eq:tech04}
\begin{array}{rcl}
\int_{H^c(k\sqrt{c\dim})} Z_n(u) du & \leq & (e^{1-\beta} \beta)^{\dim-1} \int Z_n(u) du \\
&\leq_{(1)} &  (e^{1-\beta} \beta)^{\dim-1} 2\int_{H(\sqrt{c\dim})} Z_n(u) du\\
&\leq_{(2)} &  (e^{1-\beta} \beta)^{\dim-1} 2e^{c\dim\lambda_n(c)} \int_{H(\sqrt{c\dim})} \tilde Z_n(u) du\\
\end{array}
\end{equation} where (1) used that  $\int Z_n(u) du \leq 2 \int_{H(\sqrt{cd})}
Z_n(u) du$ by Lemma 5.16 in \cite{LV01} with $k=1$ and $c/4\geq 20$, and (2) used Lemma \ref{Lemma1:old} since the integration is over $H(\sqrt{c\dim})$.

To prove the statement of the lemma, we have
\begin{eqnarray*} \int_{H(k\sqrt{c\dim})^c} \pi
&& (\theta_0 + n^{-1/2} J^{-1}u) Z_n(u) du \\
 && \leq
\ess\sup_{H(k\sqrt{c\dim})^c} \pi(\theta_0 + n^{-1/2}J^{-1}u)
\int_{H(k\sqrt{c\dim})^c} Z_n(u)du
\end{eqnarray*}
and the result follows by (\ref{Eq:tech04}) and noting
$$\begin{array}{rl}
2(e^{1-\beta}\beta)^{\dim-1}e^{c\dim \lambda_n(c)} & = 2e^{\dim-1}([\dim/(\dim-1)]ck\varphi)^{\dim-1}e^{-ck\dim\varphi}e^{c\dim\lambda_n(c)}\\
& \leq \exp\left( \dim + \dim\log(2ck\varphi) + c\dim\lambda_n(c)-ck\dim \varphi\right)\\
& \leq \exp\left( -ck\dim \varphi/4\right)  \end{array}$$
since $1+\log(2x) \leq x/4$ for $x\geq 19$ and $\lambda_n(c) \leq \varphi/2$.$\square$

We note that the value of $c$ in the previous lemma could depend
on $n$ as long as the condition is satisfied. In fact, we can have
$c$ as large as
\begin{equation}\label{Def:an}
a_n := \sup \{ c : \lambda_n(c) < 1/16 \}.
\end{equation}
The sequence $a_n$ defined in (\ref{Def:an}) characterizes a neighborhood of size $\sqrt{a_n d}$
on which the quantity $Z_n(\cdot)$ can still be bounded by a
proper Gaussian. Essentially, Lemma \ref{Lemma4:improv} bounds the contribution
outside this neighborhood.

We close this section with a technical
lemma that combines some of the previous results to be easily applied.
\begin{lemma}\label{lemma:easy}
Suppose $\|\Delta_n\|^2 \leq C_1 d$, $\sup_{\theta\in\Theta} \pi(\theta)/\pi(\theta_0)\leq \exp(w\dim)$, $c > 32[4C_1\vee 5 \vee w]$, Assumption P($c$) holds, $r_{1n}:=K_n(c)\sqrt{\|F^{-1}\|_{op}c\dim/n}=o(1)$, $r_{2n}:= \frac{1}{16} - \lambda_n(c) - w/c$, and $r_{3n}:=cd\lambda_n(c)=o(1)$. Then we have:
$$\begin{array}{l}
(1) \sup_{ \|u\| \leq \sqrt{cd}} \left| \frac{\pi(\theta_0+n^{-1/2}J^{-1}u)}{\pi(\theta_0)}-1 \right| \leq (1+o(1))r_{1n}\\
(2) \int \pi(\theta_0+n^{-1/2}J^{-1}u)Z_n(u)du = [1+O(r_{1n}+e^{-c\dim r_{2n}}+r_{3n})]\int \pi(\theta_0)\tilde Z_n(u) du,\\
\end{array}
$$
\end{lemma}
\textbf{Proof:} To show (1) note that for $x = o(1)$, we have $\exp(x) = 1 + x(1+o(1))$. Using that $\log \pi$ is Lipschitz by Assumption P($c$), and $K_n(c)\sqrt{\|F^{-1}\|_{op}c\dim/n} = o(1)$ we have
$$\begin{array}{rl}
 \sup_{ \|u\| \leq \sqrt{cd}} \left| \frac{\pi(\theta_0+n^{-1/2}J^{-1}u)}{\pi(\theta_0)}-1 \right| & \leq \left|\exp\left( K_n(c)\sqrt{\|F^{-1}\|_{op}c\dim/n}\right) - 1\right|\\
  & = (1+o(1))K_n(c)\sqrt{\|F^{-1}\|_{op}c\dim/n}\\
  \end{array}
$$where we used that $\|J^{-1}u\|\leq \sqrt{\|F^{-1}\|_{op}c\dim}$ in the specified range.

Next we show (2). By the result (1) we have
\begin{equation}\label{TwoSplit}\begin{array}{rl}
 \int \pi(\theta_0+n^{-1/2}J^{-1}u)Z_n(u)du & = \{1+O(r_{1n})\}\int_{H(\sqrt{cd})} \pi(\theta_0) Z_n(u) du \\
 & + \int_{H(\sqrt{cd})^c} \pi(\theta_0+n^{-1/2}J^{-1}u) Z_n(u)d u \\
 \end{array} \end{equation}
To control the first term in (\ref{TwoSplit}), by Lemma \ref{Lemma3:improv} we have
$$ \int_{H(\sqrt{cd})}|Z_n(u)- \tilde Z_n(u)|du \leq c\dim \lambda_n(c)\exp(2c\dim \lambda_n(c)) \int \tilde Z_n(u)du$$
where $c\dim \lambda_n(c)\exp(2c\dim \lambda_n(c)) = O(r_{3n})$ since $r_{3n}=o(1)$.

%
To control the second term in (\ref{TwoSplit}), by  Lemma \ref{Lemma4:improv}, under the choice of $c$ above, we have
{\small $$\begin{array}{rl}
\displaystyle \int_{H(\sqrt{cd})^c} \pi(\theta_0+n^{-1/2}J^{-1}u) Z_n(u)d u &\displaystyle  \leq  \sup_{u\in\mathcal{U}}\pi(\theta_0+n^{-1/2}J^{-1}u) e^{cd\lambda_n(c)-cd/16}\int\tilde Z_n(u)du  \\
& \leq \exp(- c\dim[ 1/16 - \lambda_n(c) - w/c]) \int \pi(\theta_0)\tilde Z_n(u)du\\
& = \exp(- c\dim r_{2n}) \int \pi(\theta_0)\tilde Z_n(u)du\\
 \end{array}$$}
where we used that $\sup_{u\in\mathcal{U}}\pi(\theta_0+n^{-1/2}J^{-1}u) \leq \pi(\theta_0) \exp(w\dim)$.$\square$ 

\section*{Appendix B: Proofs of Results in Section 2
}\label{Sec:Proof}
\renewcommand{\theequation}{B.\arabic{equation}}
\renewcommand{\thesection}{B}
\setcounter{equation}{0}
\renewcommand{\theproposition}{B.\arabic{proposition}}
\renewcommand{\thelemma}{B.\arabic{lemma}}
\medskip

Armed with Lemmas \ref{Lemma1:old}, \ref{Lemma3:improv},
\ref{Lemma4:improv}, and \ref{lemma:easy}, we now show asymptotic
normality and $\alpha$-moment convergence results (respectively Theorem
\ref{Thm:Main:improv} and \ref{Thm:Main:Alpha}) under the
appropriate growth conditions of the dimension of the parameter
space with respect to the sample size.

It is easy to see that Theorem \ref{Thm:Main:improv} follows from
Theorem \ref{Thm:Main:Alpha} with $\alpha = 0$, therefore its
proof is omitted.

\textbf{Proof of Theorem~\ref{Thm:Main:Alpha}:}
Recall that  $\sup_\theta \pi(\theta)/\pi(\theta_0)\leq \exp(w\dim)$ by Assumption P. Since $E[\Delta_n\Delta_n']=I_d$ we have $\|\Delta_n\|=O_p(\sqrt{d})$. We will condition on the event $\{\|\Delta_n\|^2\leq C_1d\}$ which occurs with probability at least $1-1/C_1$ by Markov inequality. We will show that conditioned on $\{\|\Delta_n\|^2\leq C_1d\}$,
$V_n := \int_{\mathcal{U}} \|u\|^\alpha | \pi_n^*(u) - \phi_\dim(u;\Delta_n,I_\dim)| du = o(1).$ Thus we have for any $\epsilon>0$ and $C_1>0$, $\lim_{n\to\infty}P(|V_n|>\epsilon) \leq 1/C_1$ which implies $|V_n|=o_p(1)$.

Let $\bar c \geq 100 \vee 80w$ be sufficiently large so that conditions in Lemma \ref{lemma:easy} are satisfied with $c\dim = \bar{c}M_{d,\alpha}$ where $c > 32[4C_1\vee 5\vee w]$. Let $r_{1n}:=K_n(\bar{c}M_{d,\alpha}/d)\sqrt{\|F^{-1}\|_{op}\bar{c}M_{d,\alpha}/n}$, $r_{2n}:= [1/16 - \lambda_n(\bar{c}M_{d,\alpha}/d) - w/\bar{c}]$ (which is bounded away from zero), and $r_{3n}:=\bar{c}M_{d,\alpha}\lambda_n(\bar{c}M_{d,\alpha}/d)$.

We will divide the integral of (\ref{Result:Alpha}) in two regions
$$\Lambda = \left\{ u \in \RR^\dim : \|u\| \leq \sqrt{\bar c
M_{\dim,\alpha}} \right\} \ \ \mbox{and} \ \ \Lambda^c.$$

Note that by construction
$$ \pi_n^*(u) = \frac{\pi(\theta_0+n^{-1/2}J^{-1}u) Z_n(u)}{\int \pi(\theta_0+n^{-1/2}J^{-1}u') Z_n(u')du'} \ \ \mbox{and} \ \ \phi(u;\Delta_n,I_d) = \frac{\tilde Z_n(u)}{\int \tilde Z_n(u')du'}.$$
To simplify the notation we let $I := \int \pi(\theta_0+n^{-1/2}J^{-1}u') Z_n(u')du'$, $\tilde I := \int \tilde Z_n(u')du'$,  $\pi_u(u):= \pi(\theta_0+n^{-1/2}J^{-1}u)$ so that $\pi_u(0)= \pi(\theta_0)$. Thus we  have
 \begin{equation}\label{Eq:MainThm2}\begin{array}{rl}
 \int\|u\|^\alpha\left| \frac{\pi_u(u)Z_n(u)}{I} - \frac{\tilde Z_n(u)}{\tilde I} \right|du
& \leq \int_\Lambda \|u\|^\alpha\frac{\pi_u(u)Z_n(u)}{I}\left| \frac{\pi_u(0)}{\pi_u(u)} - 1\right| du +\\
&+\int_\Lambda\|u\|^\alpha\left| \frac{\pi_u(0)Z_n(u)}{I} - \frac{\pi_u(0)\tilde Z_n(u)}{\pi_u(0)\tilde I} \right|du \\
&+\sup_{u}\pi_u(u)\int_{\Lambda^c}\|u\|^\alpha\frac{Z_n(u)}{I}du \\
& +\int_{\Lambda^c}\|u\|^\alpha\frac{\tilde Z_n(u)}{\tilde I} du. \\
\end{array}\end{equation}
Next we bound each of the four terms on the RHS.

To bound the first integral in the RHS of (\ref{Eq:MainThm2}),
by Lemma \ref{lemma:easy}, since $r_{1n}=o(1)$, we have
$$\sup_{ u\in \Lambda} \left| \frac{\pi(\theta_0)}{\pi(\theta_0+n^{-1/2}J^{-1}u)}-1 \right| \leq (1+o(1))r_{1n}.$$
Thus,
$$\begin{array}{rl}
  \int_\Lambda \|u\|^\alpha\frac{\pi_u(u)Z_n(u)}{I}\left| \frac{\pi_u(0)}{\pi_u(u)} - 1\right| du &\leq (1+o(1))r_{1n} \int_\Lambda \|u\|^\alpha\frac{\pi_u(u)Z_n(u)}{I}du \\
& \leq  (1+o(1))r_{1n} (\bar c M_{d,\alpha})^{\alpha/2}\int_\Lambda \frac{\pi_u(u)Z_n(u)}{I}du\\
& \leq (1+o(1))r_{1n} (\bar c M_{d,\alpha})^{\alpha/2} = o(1)\end{array}$$
where the last relation follows from $(iv')$.

To bound the second integral in (\ref{Eq:MainThm2}), note that
 \begin{equation}\label{SecondTermNew}\begin{array}{rl}
\int_\Lambda\|u\|^\alpha\left| \frac{\pi_u(0)Z_n(u)}{I} - \frac{\pi_u(0)\tilde Z_n(u)}{\pi_u(0)\tilde I} \right|du & \leq \int_\Lambda\|u\|^\alpha \frac{\pi_u(0)Z_n(u)}{I}\left| \frac{\pi_u(0)\tilde I-I}{\pi_u(0)\tilde I}\right|du +\\
& + (1/\tilde I)\int_\Lambda\|u\|^\alpha\left| Z_n(u) - \tilde Z_n(u) \right|du.\\
\end{array}\end{equation}
By Lemma \ref{lemma:easy} part (2), $I = [1+O(r_{1n}+e^{-r_{2n}\dim}+r_{3n})]\pi_u(0) \tilde I.$ So that the first term in (\ref{SecondTermNew}) satisfies
$$
\int_\Lambda\|u\|^\alpha \frac{\pi_u(0)Z_n(u)}{I}\left| \frac{\pi_u(0)\tilde I-I}{\pi_u(0)\tilde I}\right|du \leq (\bar c M_{d,\alpha})^{\alpha/2} O(r_{1n}+e^{-r_{2n}\dim}+r_{3n})\int_\Lambda \frac{\pi_u(0)Z_n(u)}{I}du  $$
where $\int_\Lambda \frac{\pi_u(0)Z_n(u)}{I}du \leq 1+o(1)$ using Lemma \ref{lemma:easy} part (1). To bound the second part of (\ref{SecondTermNew}), by Lemma \ref{Lemma3:improv} we have
$$ \begin{array}{rl}
\int_\Lambda \|u\|^\alpha \left| Z_n(u) - \tilde Z_n(u) \right|du & \leq (\bar c M_{d,\alpha})^{\alpha/2}\int_\Lambda \left| Z_n(u) - \tilde Z_n(u) \right|du \\
& \leq  (\bar c M_{d,\alpha})^{\alpha/2} r_{3n}\exp(2r_{3n}) \tilde I\end{array}$$
under the condition $(\bar c M_{d,\alpha})^{\alpha/2} r_{3n}=o(1)$. Therefore we have
{\small $$\begin{array}{rl}
\int_\Lambda\|u\|^\alpha\left| \frac{\pi_u(0)Z_n(u)}{I} - \frac{\pi_u(0)\tilde Z_n(u)}{\pi_u(0)\tilde I} \right|du  \leq (\bar c M_{d,\alpha})^{\alpha/2}O(r_{1n}+e^{-r_{2n}\bar cM_{d,\alpha}}+r_{3n}) = o(1)\\
\end{array}$$} under our conditions.

To bound the third term in the RHS of (\ref{Eq:MainThm2}), let $$\Lambda_k^c := \left\{ u :
\|u\| \in \left[ k \sqrt{\bar{c} M_{\dim,\alpha}}, (k+1)
\sqrt{\bar{c} M_{\dim,\alpha}}\right] \right\}.$$
For each $k$, using Lemma \ref{Lemma4:improv}, and subsequently Lemma \ref{lemma:easy} part (2),  we have {\small
$$\begin{array}{rl}
\int_{\Lambda_k^c} Z_n(u) du
& \leq  \exp(r_{3n}-k\bar{c}M_{\dim,\alpha}/16) \int \tilde
Z_n(u) du \\
&\leq \exp(r_{3n}-k\bar{c}M_{\dim,\alpha}/16)[1+O(r_{1n}+e^{-r_{2n}\dim}+r_{3n})] I/\pi_u(0).\end{array}$$}

Thus,
{\small
\begin{equation}\label{Eq:Last}
\begin{array}{rl}
& \displaystyle \sup_u \pi_u(u) \int_{\Lambda^c} \|u\|^\alpha
\frac{Z_n(u)}{I} du \displaystyle  \leq \sup_u \pi_u(u) \sum_{k=1}^{\infty}\left\{
(k+1)^\alpha\bar{c}^{\alpha/2}M_{\dim,\alpha}^{\alpha/2}
\int_{\Lambda_k^c}Z_n(u) du\right\}\\
& \displaystyle  \leq \sup_u \frac{\pi_u(u)}{\pi_u(0)} \bar{c}^{\alpha/2}M_{\dim,\alpha}^{\alpha/2} W_n\sum_{k=1}^{\infty}
(k+1)^\alpha\exp(-k\bar{c}M_{\dim,\alpha}/16)
\end{array}
\end{equation}}\noindent where $W_n = [1+O(r_{1n}+e^{-r_{2n}\dim}+r_{3n})]\exp(r_{3n}) = 1+o(1)$. Note also that $\sup_u \pi_u(u)/\pi_u(0) \leq \exp(wd)$.
Since $M_{\dim,\alpha} > \max\{1,\alpha\}$, by $\bar{c}\geq 160$, we have
$$ \sum_{k=1}^{\infty} (k+1)^\alpha e^{-k\bar{c}M_{\dim,\alpha}/16} = \sum_{k=1}^{\infty} e^{\alpha \log(1+k) - k\bar{c}M_{\dim,\alpha}/16} \lesssim
e^{-\bar{c}M_{\dim,\alpha}/20}. $$  Moreover, our definition of $M_{\dim,\alpha}$ also implies
that the RHS of (\ref{Eq:Last}) is bounded up to a constant by:
$$\begin{array}{rl}
 W_n \exp(wd)\bar{c}^{\alpha/2}M_{\dim,\alpha}^{\alpha/2} e^{-\bar{c}M_{\dim,\alpha}/20}
 & =
W_n\exp\left( wd + \frac{\alpha }{2}( \ln \bar{c} + \ln M_{\dim,\alpha} )
- \bar{c}M_{\dim,\alpha}/20 \right)\\
&  \leq W_n\exp\left( -
\bar{c}M_{\dim,\alpha}/40\right) = o(1)\end{array}$$ by the choice of $\bar{c}$ and because $M_{\dim,\alpha} \to \infty$.

Finally, the last integral in (\ref{Eq:MainThm2}) converges to zero by standard bounds on Gaussian densities for an appropriate choice of $\bar{c}$ and $M_{\dim,\alpha} \to \infty$.$\square$ 

\section*{Appendix C: Proofs of Results in Section 3 
}\label{App:Cur}
\renewcommand{\theequation}{C.\arabic{equation}}
\renewcommand{\thesection}{C}
\setcounter{equation}{0}
\setcounter{lemma}{0}
\setcounter{proposition}{0}
\renewcommand{\theproposition}{C.\arabic{proposition}}
\renewcommand{\thelemma}{C.\arabic{lemma}}

\medskip

For $\gamma \in \Gamma = \sqrt{n}(\Psi-\eta_0)$ let $u_\gamma =\sqrt{n}J[\theta(\eta_0+n^{-1/2}\gamma)- \theta(\eta_0)] \in \mathcal{U}\subset \RR^d$, and we write $$\begin{array}{rl}Z_n(u_\gamma)& := Z_n\Big(n^{1/2}J\[\theta\(\eta_0 +
n^{-1/2}\gamma\)-\theta(\eta_0)\]\Big)\\
& =  \exp\left(
\frac{1}{\sqrt{n}}\sp{\sum_{i=1}^nX_i}{ J^{-1} u_\gamma} - n\[\psi\(
\theta_0+n^{-1/2}J^{-1}u_\gamma\) - \psi(\theta_0)\] \right),
\end{array}$$ for $\theta_0+n^{-1/2} J^{-1}u_\gamma \in
\Theta$, and $Z_n(u)=0$ otherwise.


In order to state the conditions of the next lemma let
\begin{equation}\label{Def:Regions}\begin{array}{l}
N_{I} = \sqrt{\dim_1\|(G'FG)^{-1}\|_{op}}+\sqrt{\dim_{prior}\|(G'FG)^{-1}\|_{op}}, \ \ \mbox{and}\\
N_{II} = \sqrt{\dim}+\{\sqrt{d_{prior}} + \sqrt{\dim_1\|(G'FG)^{-1}\|_{op}} \\
 \ \ \ \ \ \ \ \ +\sqrt{\dim_1\log (\dim_1+\|J^{-1}\|_{op}/\varepsilon_0)}\}\max\{1, \|J^{-1}\|_{op}/\varepsilon_0\}.
\end{array}
\end{equation}

\begin{lemma}\label{Lemma:CurvedTail} For any fixed value $c>0$, suppose that $\dim_1\lambda_n(c\{1+2\|JG\|_{op}\}^2N_{I}^2/\dim) \to 0$, $ c\{1+2\|JG\|_{op}\}^2N_{II}^2/\dim = o(a_n)$,
 Assumption P($c$), A, B and P' hold.
For fixed constants $C_1, C_2$,  the relations hold for each $n$, $\|\Delta_n\| \leq \sqrt{C_1\dim}$ and $\|(G'FG)^{1/2} s\|\leq C_2\sqrt{\dim_1}$. Then we have
 we have {\small \begin{eqnarray*}  \int_{\Gamma\setminus
B_{\dim_1}(0,\bar{k}N_{I})} \pi\(\theta(\eta_0) + n^{-1/2}J^{-1}u_\gamma\)
Z_n(u_\gamma) d\gamma  = o\(\int_{\Gamma} \pi\(\theta(\eta_0) +
n^{-1/2}J^{-1}u_\gamma\) Z_n(u_\gamma) d\gamma \)
\end{eqnarray*}}
where for a constant $\bar k$ that depends on $C_1, C_2$, and Assumption P'.
\end{lemma}
\textbf{Proof:}
We will consider the following partition of $\Gamma$:
$$\begin{array}{c}
(I):= \Gamma \cap B_{\dim_1}(0,\bar{k}N_{I}), \ \ (II):= \Gamma \cap B_{\dim_1}(0,\bar{k}N_{II}) \setminus B_{\dim_1}(0,\bar{k}N_{I}), \\ \mbox{and} \ \ (III) = \Gamma \cap B_{\dim_1}(0,\bar{k}N_{II})^c \end{array}$$
where the sequences  $N_{II}$ and $N_{I}$ are defined in (\ref{Def:Regions}) and the constant $\bar{k}=\bar{k}(C_1,C_2,P')$ is set large enough independent $n$. 

Define $c_{I} = \{1+2\|JG\|_{op}\}^2\bar{k}^2N_{I}^2/\dim$
and $c_{II} = \{1+2\|JG\|_{op}\}^2\bar{k}^2N_{II}^2/\dim$. Our conditions imply
\begin{equation}\label{LCond} \dim_1\lambda_n(c_{I}) \to 0 \ \ \mbox{and} \ \ \lambda_{n}(c_{II}) <
1/16.\end{equation}

For any $\gamma \in (III)$, we have $\|\gamma\|\geq \bar{k}N_{II}$ so that by Assumption B, $\|u_\gamma\|\geq  \varepsilon_0\|\gamma\|/\|J^{-1}\|_{op} \geq \varepsilon_0 \bar{k} N_{II}/\|J^{-1}\|_{op} \geq \bar{k} N_{II} \min\{1,\varepsilon_0/\|J^{-1}\|_{op}\}$ (we denote $\tilde N_{II}:=N_{II} \min\{1,\varepsilon_0/\|J^{-1}\|_{op}\}$).

Define $\tilde{u}_\gamma = \bar{k}\tilde N_{II}
\frac{u_\gamma}{\|u_\gamma\|} \in {\mathcal U}$ so that $\|\tilde u_\gamma\| = \bar{k}\tilde N_{II} \leq \|u_\gamma\|$ (there might not be a $\tilde \gamma$ for which $u_{\tilde \gamma} = \tilde u_\gamma$). Using Lemma
\ref{Lemma1:old} we have
$$\ln Z_n(\tilde{u}_\gamma) \leq \sp{\Delta_n}{\tilde{u}_\gamma} - \frac{1}{2}(1-2\lambda_n(c_{II})) \|\tilde{u}_\gamma\|^2.$$
Since $\log Z_n(\cdot)$ is globally concave in ${\mathcal U}$ and $\log
Z_n(0) = 0$, for any $\gamma\in (III)$ \begin{equation}\label{Eq:Tail}\begin{array}{rl}
& \log
 Z_n(u_\gamma)
\leq \frac{\|u_\gamma\|}{\|\tilde u_\gamma\|} \ln
Z_n(\tilde u_\gamma) \\
& \leq \frac{\|u_\gamma\|}{\bar k \tilde N_{II}} \( \|\Delta_n\| \|\tilde u_\gamma\| - \frac{1 -
2\lambda_n(c_{II})}{2}\|\tilde u_\gamma\|^2 \) \\
& \leq  \frac{\|u_\gamma\|}{\bar k \tilde N_{II}} \( \sqrt{C_1d}\bar k\tilde N_{II} - \frac{1}{3}\bar{k}^2 \tilde N_{II}^2 \) \\
& \leq  - \|u_\gamma\| \bar{k}\tilde N_{II} /5\end{array}\end{equation} where we use that $\|\Delta_n\| \leq \sqrt{C_1\dim}\leq \bar{k}\tilde N_{II}/10$ and $\lambda_n\(c_{II}\)\leq 1/16$.

The contribution of $(III)$ can
be bounded by {\small \begin{eqnarray*}
\int_{(III)} \pi\(\theta(\eta_0) + n^{-1/2}J^{-1}u_\gamma\)
Z_n(u_\gamma) d\gamma \leq  \displaystyle \pi(\theta_0)
\ess\sup_{\eta \in \Psi} \frac{\pi(\theta(\eta))}{\pi(\theta(\eta_0))}
\int_{(III)} \eexp\( -\frac{\bar{k}\tilde N_{II}}{5}\|
u_\gamma \|\) d\gamma, \\
\end{eqnarray*}}\noindent where $\ess\sup_{\eta \in \Psi} \pi(\theta(\eta))/\pi(\theta(\eta_0)) \leq \exp(w\dim_{prior})$ for a constant $w$ associated with Assumption P'.

Since  $\gamma \in B_{\dim_1}(0,\bar{k}N_{II})^c$ implies that $\|u_\gamma\| \geq \varepsilon_0 \|\gamma\|/\|J^{-1}\|_{op}$ by Assumption B, using Lemma 5.16 in \cite{LV01} with $\beta:= \bar{k}^2 \tilde N_{II}^2/\{5(\dim_1-1)\}\geq 100$ for $\bar{k}$ large enough, direct calculations yield

$$\begin{array}{rl}
 \int_{(III)} \eexp\( -\frac{\bar{k}\tilde N_{II}}{5}\| u_\gamma \|\)
d\gamma & \leq (5\|J^{-1}\|_{op}/\{\varepsilon_0\bar{k}\tilde N_{II}\})^{\dim_1} \Gamma(d_1) {\rm Vol}_{\dim_1-1}(S^{\dim_1-1}(0,1))\\
& \times  \exp(\dim_1-1-\bar{k}^2\tilde N_{II}^2/5 +(\dim_1-1)\ln\{\bar{k}^2\tilde N_{II}^2/[5(\dim_1-1)] \})\\
& \leq  \eexp\left(-\bar{k}^2\tilde N_{II}^2/10 + C' d_1\ln (d_1+\|J^{-1}\|_{op}/\varepsilon_0)   \right)\\
\end{array}$$ for some constant $C'$, where ${\rm Vol}_k$ denote the $k$-dimensional volume of a set, $\Gamma(\dim_1)\leq {\dim_1}^{\dim_1}$ is the gamma function, and we used $\log(A)\leq A/20$ for $A\geq 100$.

Using the assumption on the prior, and the definition of $\tilde N_{II}$, we can bound the contribution
of $(III)$ by
{\small \begin{equation}\label{Cont:III}
 \pi(\theta_0) \eexp\left( -\min\{1, \ \varepsilon_0/\|J^{-1}\|_{op}\}^2\frac{\bar{k}^2N_{II}^2}{10} + w\dim_{prior}+ C'd_1\ln
(d_1+\|J^{-1}\|_{op}/\varepsilon_0) \right).
\end{equation}}

Next consider $\gamma \in (II)$ where $\gamma \in
B_{\dim_1}(0,\bar{k}N_{II})\setminus B_{\dim_1}(0,\bar{k}N_{I})$. Because $\lambda_{n}(c_{II})< 1/16$, by Lemma \ref{Lemma1:old} we have
$$ \ln Z_n(u_\gamma) \leq \sp{\Delta_n}{u_\gamma} - \frac{1}{2}\frac{7}{8} \|u_\gamma\|^2. $$

By Assumption A and $\|\Delta_n\|\leq \sqrt{C_1 d}$, for any $\gamma \in
B_{\dim_1}(0,\bar{k}N_{II})$ we have
\begin{equation}\label{Well}\sp{\Delta_n}{u_\gamma} = o(1)+|\sp{G'J\Delta_n}{\gamma}|  \ \ \ \mbox{and} \ \ \ \|u_\gamma\|^2 = o(1) + \|JG\gamma\|^2.\end{equation} Combining these relations
 {\small $$\begin{array}{rcl} \displaystyle
&& \int_{(II)} \pi\(\theta(\eta_0) + n^{-1/2}J^{-1}u_\gamma\) Z_n(u_\gamma)
d\gamma \\
&& \leq  \displaystyle \pi(\theta_0) \ess \(\sup_{\eta
\in \Psi} \frac{\pi(\theta(\eta))}{\pi(\theta(\eta_0))}\)
\int_{(II)} \eexp\( \sp{\Delta_n}{u_\gamma} - \frac{1}{2}\frac{7}{8} \|u_\gamma\|^2 \) d\gamma\\
&&\leq  \displaystyle \pi(\theta_0) \ess \(\sup_{\eta \in
\Psi} \frac{\pi(\theta(\eta))}{\pi(\theta(\eta_0))}\)
(1+o(1))\int_{(II)} \eexp\(  \sp{\Delta_n}{JG\gamma}- \frac{1}{4} \|JG\gamma\|^2  \) d\gamma\\
&&=  \displaystyle \pi(\theta_0)
(1+o(1))\exp(wd_{prior}+\|(G'FG)^{1/2} s\|^2)\int_{(II)} \eexp\(  - \frac{1}{4} \|(G'FG)^{1/2}(\gamma-2s)\|^2  \) d\gamma\\
&&\leq  \displaystyle \pi(\theta_0)
(1+o(1))\exp(wd_{prior}+\|(G'FG)^{1/2} s\|^2)\int_{B_{\dim_1}(0,\bar{k}N_{I}/2)^c} \eexp\(  - \frac{1}{4} \|(G'FG)^{1/2}\gamma\|^2  \) d\gamma\\
&& \leq \pi(\theta_0) (1+o(1))\exp( wd_{prior} +\|(G'FG)^{1/2} s\|^2 - \bar{k}^2N_{I}^2/\{12\|(G'FG)^{-1}\|_{op}\}\\ && +(\dim_1/2) \log(8\pi) - \frac{1}{2}\log\det(G'FG))
\end{array}
$$} where we used $\|2 s\|\leq \|(G'FG)^{-1/2}\|_{op}\|2(G'FG)^{1/2} s\|\leq \bar{k}N_{I}/2$,  the set inclusion $(II)-2s \subset B_{\dim_1}(0,\bar{k}N_{I}/2)^c$, and standard concentration inequalities for Gaussian densities. Further using that $\|(G'FG)^{1/2} s\|\leq C_2\sqrt{\dim_1}$,  we can bound the contribution of $(II)$ by
\begin{equation}\label{Cont:II}
\pi(\theta_0) \eexp\Big( w\dim_{prior}- \bar{k}^2N_{I}^2/\{12\|(G'FG)^{-1}\|_{op}\}+\dim_1 C_2' - \frac{1}{2}\log\det(G'FG) \Big)
\end{equation} where $C_2'= 1+C_2^2+\log(8\pi)$.

Finally, we show a lower bound on the integral over $(I)$. First
 note that for any $\gamma \in (I)$ condition (\ref{Curved:Cond1})
holds and we have $u_\gamma = J(R_{1n} + (I+R_{2n})G\gamma)$.
Therefore, $u_\gamma \in B_\dim(0, \{\|JG\|_{op} + \|JR_{2n}G\|_{op}\} \bar{k}N_{I} +
\|JR_{1n}\| ) \subset B_\dim(0, \{1 + 2 \|JG\|_{op}\} \bar{k}N_{I} )$ for $n$ sufficiently large under Assumption A. Therefore \begin{equation}\label{LastPiece}\begin{array}{rcl}
&& \int_{(I)}\pi\(\theta(\eta_0) + n^{-1/2}J^{-1}u_\gamma\) Z_n(u_\gamma)
d\gamma \\
&& \geq \pi(\theta_0)\eexp\( -
K_n(c_{I})\sqrt{\frac{ c_{I}\dim}{n}} \)
\int_{(I)} Z_n(u_\gamma)
d\gamma.\\
\end{array}
\end{equation}
Under our assumptions $\eexp\( -
K_n(c_{I})\sqrt{c_{I}d/n} \) = (1+o(1))$.
Furthermore, using (\ref{Well}) and Lemma \ref{Lemma1:old}, we have \begin{equation}\label{NextLastPiece}
\begin{array}{rcl}
\ln Z_n(J\{R_{1n} + (I+R_{2n})G\gamma\}) &=&  \sp{\Delta_n}{JR_{1n}
+J(I+R_{2n})G\gamma} - \\
& & \ \ - \frac{1+2\lambda_n(c_{I})}{2}\|JR_{1n}
+J(I+R_{2n})G\gamma\|^2 \\
&\geq & o(1) + \sp{G'J\Delta_n}{\gamma} -
\frac{1+2\lambda_n(c_{I})}{2}\|JG\gamma\|^2.
\end{array}
\end{equation}
Therefore, from (\ref{LastPiece}) and (\ref{NextLastPiece}) we have {\small $$\begin{array}{lll}
& \int_{(I)}\pi\(\theta(\eta_0) + n^{-1/2}J^{-1}u_\gamma\) Z_n(u_\gamma)
d\gamma \\
& \geq  \pi(\theta_0)(1+o(1))\int_{(I)}
\eexp\(\sp{GJ\Delta_n}{\gamma} -
\frac{1+2\lambda_n(c_{I})}{2}\|JG\gamma\|^2\)d\gamma\\
&\geq  \pi(\theta_0)(1+o(1))(1 -
2\lambda_n(c_{I}))^{d_1/2} \det(G'FG)^{-1/2}\\
&\geq  \pi(\theta_0) (1+o(1)) \exp( -\frac{1}{2}\log\det(G'FG) )\\
\end{array}
$$}where we used that $\dim_1\lambda_n(c_{I}) = o(1)$, and the definition of $(I)$ with $\bar{k}$ large enough.

The choices stated in the beginning for $N_{I}$ and $N_{II}$ yields the result provided $\bar{k}$ can is chosen sufficiently large (independent of $n$).$\square$

\textbf{Proof of Theorem~\ref{Thm:CurvedConsistency}:} Let $\hat \gamma$ be such that $\hat \eta = \eta_0 + n^{-1/2} \hat
\gamma$. By Condition P', we have $\sup_{\eta\in\Psi} |\log [\pi(\theta(\eta))/\pi(\theta(\eta_0))]| \leq w\dim_{prior}$, and it suffices to show that $\ln Z(u_{\gamma}) < -w \dim_{prior}$ for any $\gamma
\notin B_{\dim_1}(0,\bar k N_{I})$,  with  probability $1-O(1/\bar{k})$ where $N_I$ is defined as in (\ref{Def:Regions}) and $\bar k$ is sufficiently large. Indeed, in that case, since $\log Z_n(0) = 0$,
the MLE $\hat \gamma \in B_{\dim_1}(0,\bar{k} N_I)$ and the result
follows.

From (\ref{Well}) we have
$$\begin{array}{rl}
 \ln \tilde Z_n(u_\gamma) & \leq \sp{\Delta_n}{JR_{1n}+J(I+R_{2n})G\gamma}-\frac{1-2\lambda_n(c_I)}{2}\|JR_{1n}+J(I+R_{2n})G\gamma\|^2\\
&  = o(1) + \sp{G'J\Delta_n}{\gamma}-\frac{1}{2}\|JG\gamma\|^2+2\lambda_n(c_I)\|JG\gamma\|^2\\
& = o(1) + \frac{1}{2}\|(G'FG)^{1/2}s\|^2 -\frac{1}{2}\|(G'FG)^{1/2}(\gamma-s)\|^2 +2\lambda_n(c_I)\|(G'FG)^{1/2}\gamma\|^2 \\
& \leq o(1) - \frac{1-8\lambda_n(c_I)}{4}\|(G'FG)^{1/2}\gamma\|^2 \end{array} $$
provided $\|(G'FG)^{1/2}s\|\leq (1/4)\|(G'FG)^{1/2}\gamma\|$.
For $N_{I}$ as defined in (\ref{Def:Regions}) and $\bar k$ sufficiently large, with probability $1-O(1/\bar k$) we have $$\|(G'FG)^{1/2}s\|\leq C'\sqrt{d_1}\leq \bar{k}N_{I}/\{8\sqrt{\|(G'FG)^{-1}\|_{op}}\}.$$
Provided $\lambda_n(c_{I})<1/16$, which is implied by $N_I^2/d=o(a_n)$, setting $\bar{k}\geq \sqrt{16w}$, we have
$$\begin{array}{rl}
 \ln \tilde Z_n(u_\gamma) & \leq o(1) - \frac{1-8\lambda_n(c_I)}{4}\|(G'FG)^{1/2}\gamma\|^2 \\
 & \leq o(1) -\frac{1}{8}\bar{k}^2\{d_1+d_{prior}\}\end{array} $$

$\square$

\textbf{Proof of Theorem~\ref{Thm:Cexpo}:}
Let $N_{I} = \sqrt{\dim_1\|(G'FG)^{-1}\|_{op}}+\sqrt{\dim_{prior}\|(G'FG)^{-1}\|_{op}}$.
We have that
{\small \begin{equation}\label{MainStepEq}\begin{array}{rl}
 \int | \pi_n^*(\gamma) - \phi_{d_1}(\gamma;s,(G'FG)^{-1})|d\gamma  & \leq  \int_{B_{\dim_1}(0,\bar{k}N_{I})} | \pi_n^*(\gamma) - \phi_{d_1}(\gamma;s,(G'FG)^{-1})|d\gamma   \\
&  + \int_{\Gamma\setminus B_{\dim_1}(0,\bar{k}N_{I})}  \pi_n^*(\gamma) d\gamma \\
&+ \int_{\Gamma\setminus B_{\dim_1}(0,\bar{k}N_{I})} \phi_{d_1}(\gamma;s,(G'FG)^{-1}) d\gamma. \end{array}\end{equation}}
The main step of the proof is to show that the second term in (\ref{MainStepEq}) is negligible for $\bar k$ large enough. Indeed, for $\bar k$ sufficiently large, with probability $1-O(1/C)$ we have $\|\Delta_n\|\leq  \sqrt{C d }$, and $\|(G'FG)^{1/2}s\|\leq \sqrt{C \dim_1}$ by Chebyshev inequality. Thus the conditions in Lemma \ref{Lemma:CurvedTail} hold with probability $1-O(1/\bar k)$ which implies $\int_{\Gamma\setminus B_{\dim_1}(0,\bar{k}N_{I})}  \pi_n^*(\gamma) \dim\gamma =o_p(1)$.

The last term in (\ref{MainStepEq}) is $o_p(1)$ by $\|(G'FG)^{1/2}s\|\leq C \sqrt{ \dim_1 }$,  occurring with probability $1-O(1/C)$, setting $\bar{k}$ sufficiently large, and known results for Gaussian densities.

The remaining of the proof is restricted to $B_{\dim_1}(0,\bar{k}N_{I})$. It follows the same steps in the proof of Theorem \ref{Thm:Main:Alpha} since Assumption A ensures that the linearization in (\ref{Curved:Rep}) is sufficiently precise in the region $B_{\dim_1}(0,\bar{k}N_{I})$ under (\ref{Curved:Cond1}) since $N_{I}\ll N_n$.$\square$

\section*{Appendix D: Bound on \lowercase{$\lambda_n(c)$}  when  $X$ is logconcave}\label{Sec:ControlLambda}
\renewcommand{\theequation}{D.\arabic{equation}}
\renewcommand{\thesection}{D}
\setcounter{equation}{0}
\setcounter{proposition}{0}
\setcounter{lemma}{0}
\renewcommand{\theproposition}{D.\arabic{proposition}}
\renewcommand{\thelemma}{D.\arabic{lemma}}
\medskip

In this section we derive a new bound on the fundamental quantity
$$ \lambda_n(c) = \frac{1}{6}\left( \sqrt{\frac{cd}{n}}B_{1n}(0) + \frac{cd}{n}B_{2n}(c)\right)$$
when the density function (\ref{Def:Expens}) is logconcave in the data. We start by restating the following theorem for logconcave distributions.

\begin{lemma}[Essentially in \cite{LV01}]\label{lemma:LV-R}
If $X$ is a random vector from a logconcave distribution in $\RR^d$ then
$$ E\left[ \|X\|^k \right]^{1/k} \leq 2 k E\left[ \| X\| \right] \leq 2k  E\left[ \| X\|^2 \right]^{1/2}.$$
\end{lemma}

This result provides a reverse direction of the H\"older inequality which will allow us to control higher moments based on the second moment. Since we will be bounding moments from random variables in the exponential family we can apply Lemma \ref{lemma:LV-R}.

In what follows we consider $\theta \in \mathcal{R}_c = \{\theta \in \Theta : \|J(\theta-\theta_0)\|\leq \sqrt{c\dim/n}\}$, $U \sim f_\theta = f(\cdot;\theta)$, and let $H_\theta=E_\theta[(U- E_\theta[U])(U-E_\theta[U])']^{1/2}$. In this notation $J = H_{\theta_0}$.

We first bound the third moment term $B_{1n}(0)$. In this case, since the variable of interest $\sp{a}{V}$ is properly normalized to have unit variance, its third moment is bounded by a constant.

\begin{lemma}[Bound on $B_{1n}$]\label{Lemma:B1n}
Suppose that $f(\cdot;\theta_0)$ is a logconcave distribution. Then we have that $B_{1n}(0) \leq 6^3$.
\end{lemma}
\textbf{Proof:} Let $V = J^{-1}(U - E[U])$ where $U \sim f_{\theta_0}$. Therefore $V$ has a logconcave density function, $E[V] = 0$, and $E[VV']=I_d$.
Using Lemma \ref{lemma:LV-R}, we have
$$ B_{1n}(0) \leq \sup_{\|a\|=1} E_{\theta_0}\left[ |\sp{a}{V}|^3 \right] \leq 6^3 \sup_{\|a\|=1} E\left[ |\sp{a}{V} |^2 \right]^{3/2} = 6^3.\square 
$$

Before we proceed to bound the term $B_{2n}$ in $\lambda_n$ we state and prove the following technical lemma.

\begin{lemma}\label{Lemma:Tech:M}
Let $X$ be a random vector in $\RR^d$ and $M$ be a $d\times d$ matrix. We have that
$$ \sup_{\|a\|=1} E\left[  |\sp{a}{MX}|^k \right] \leq \|M\|_{op}^k \sup_{\|a\|=1} E\left[ |\sp{a}{X}|^k\right]$$
\end{lemma}
\textbf{Proof:} Let $\bar a$ achieve the supremum on the left hand side. Then we have
$$\begin{array}{rcl}
 E\left[  |\sp{\bar a}{MX}|^k \right]  & = & E\left[  |\sp{M'\bar a}{X}|^k \right] = \|M'\bar a\|^kE\left[  |\sp{\frac{M'\bar a}{\|M' \bar a\|}}{X}|^k \right] \\
 & \leq & \|M'\|_{op}^k\|\bar a\|^k E\left[  |\sp{\frac{M'\bar a}{\|M'\bar a\|}}{X}|^k \right] \\
 & \leq & \|M\|_{op}^k \sup_{\|a\|=1}E\left[  |\sp{a}{X}|^k \right] \\
\end{array}
$$ since $\|\bar a \| = 1$ and $\| \frac{M'\bar a}{\|M'\bar a\|}\| = 1$.$\square$

Unlike Lemma \ref{Lemma:B1n}, we need to bound the forth moment in a vanishing neighborhood of $\theta_0$. This will require an additional assumption that $H_{\theta}$ becomes sufficiently close to $ J$ for any $\theta$ in this neighborhood of $\theta_0$. 

\begin{lemma}\label{Lemma:B2n}
Suppose that $f(\cdot;\theta)$ is a logconcave distribution  and assume that $\|I - H_\theta^{-1}J \|_{op} < 1/2$. Then we have that $$\sup_{\|a\|=1} E_{\theta}\left[ |\sp{a}{V}|^k \right] \leq 2^{2k} \cdot k^{k}.$$
\end{lemma}
\textbf{Proof:} By convexity of $t \mapsto t^k$ ($k\geq 1$) we have $(t+s)^k \leq 2^{k-1}\(t^k + s^k\)$, and Lemma \ref{Lemma:Tech:M} yields
 $$
\begin{array}{rcl}
\sup_{\|a\|=1} E_{\theta}\left[ |\sp{a}{V}|^k \right] &= & \sup_{\|a\|=1} E_{\theta}\left[ |\sp{a}{( I -  H^{-1}_\theta J +  H^{-1}_\theta J  ) V}|^k\right] \\
& \leq & 2^{k-1} \sup_{\|a\|=1} E_{\theta}\left[ |\sp{a}{( I -  H^{-1}_\theta J) V}|^k \right]+\\
&+& 2^{k-1}\sup_{\|a\|=1}E_{\theta}\left[ |\sp{a}{ H^{-1}_\theta J  V}|^k \right]\\
& \leq & 2^{k-1} \|I-H^{-1}J\|^k\sup_{\|a\|=1}E_\theta \left[ |\sp{a}{V}|^k \right] + \\
&+& 2^{k-1} \sup_{\|a\|=1} E_{\theta}\left[ |\sp{a}{ H^{-1}_\theta J  V}|^k \right].\\
\end{array}
$$

Using that $\|I-H^{-1}_\theta J\|_{op}<1/2$ we have

$$
\sup_{\|a\|=1} E_{\theta}\left[ |\sp{a}{V}|^k \right] \leq 2^k \sup_{\|a\|=1} E_{\theta}\left[ |\sp{a}{ H^{-1}_\theta J  V}|^k \right].$$

Now we invoke Lemma \ref{lemma:LV-R} to obtain

$$
\sup_{\|a\|=1} E_{\theta}\left[ |\sp{a}{V}|^k \right] \leq 2^k \cdot (2k)^k \sup_{\|a\|=1} E_{\theta}\left[ |\sp{a}{ H^{-1}_\theta J  V}|^2 \right]^{k/2} = 2^{2k} \cdot k^k
$$
since $E_\theta\[ ( H^{-1}_\theta J  V )( H^{-1}_\theta J  V )'\] = I$.$\square$

\begin{corollary}[Bound on $B_{2n}(c)$]
Suppose that $f(\cdot;\theta)$ is a logconcave distribution  and assume that $\|I - H_\theta^{-1}J \|_{op} < 1/2$ for any $\theta \in \mathcal{R}_c$. Then we have that $B_{2n}(c) \leq 2^{16}.$
\end{corollary}

\section*{Appendix E: Auxiliary results for Section 4}
\renewcommand{\theequation}{E.\arabic{equation}}
\renewcommand{\thesection}{E}
\setcounter{equation}{0}
\setcounter{lemma}{0}
\setcounter{proposition}{0}
\renewcommand{\theproposition}{E.\arabic{proposition}}
\renewcommand{\thelemma}{E.\arabic{lemma}}
\medskip

\begin{lemma}\label{lemma:MultivariateLinearModelF}
In the multivariate linear model, the information matrix satisfies $$\lambda_{min}(F)\geq \frac{1}{4} \frac{\lambda_{min}^4(\Sigma_0)\lambda_{min}^2(Z'Z/n)}{1+1 \vee [\lambda_{max}^2(\Sigma_0)\lambda_{max}^2(4\Pi_0'Z'Z/n)]}.$$
\end{lemma}
\textbf{Proof:} For a direction $\gamma = (\gamma_1,\gamma_2)$, we have
$F[\gamma,\gamma] = \nabla^2 \psi(\theta_0)[\gamma,\gamma] =$
$\nabla(\nabla \psi(\theta_0)[\gamma])[\gamma]$.
Since  $$\psi(\theta) = -\frac{1}{4n} \trace(Z\theta_2\theta_1^{-1}\theta_2'Z') - \frac{1}{2} \log \det ( -2\theta_1 )$$
by direct calculations we have
$$\begin{array}{rc}\nabla \psi(\theta)[\gamma]& = (1/[4n])\trace( \gamma_1'\theta_1^{-1}\theta_2'Z'Z\theta_2 \theta_1^{-1})-(1/[2n])\trace(\gamma_2'\theta_1^{-1}\theta_2'Z'Z)+\\
& + (1/2)\trace(\gamma_1'\theta_1^{-1})\end{array}$$ and
\begin{equation}\label{BigF}\begin{array}{rl}
F[\gamma,\gamma] 
&= -(1/[2n])\trace(\gamma_1'\theta_1^{-1}\theta_2'Z'Z\theta_2\theta_1^{-1}\theta_1^{-1}\gamma_1') + \\
&+ (1/n)\trace(\gamma_2'\theta_1^{-1}\gamma_1\theta_1^{-1}\theta_2'Z'Z) -\\
& \ \ \ -(1/[2n])\trace(\gamma_2'Z'Z\gamma_2'\theta_1^{-1}) + (1/2) \trace( \gamma_1'\theta_1^{-1}\gamma_1'\theta_1^{-1})\\
& = (4/n)\trace(\gamma_1'\Pi_0'Z'Z\Pi_0\gamma_1')+(4/n)\trace(\gamma_1'\Pi_0'Z'Z\gamma_2'\Sigma_0)+\\
& \ \ \ +(1/n)\trace(\Sigma_0^{1/2}\gamma_2'Z'Z\gamma_2'\Sigma_0^{1/2})+ 2\trace(\Sigma_0^{1/2}\gamma_1'\Sigma_0\gamma_1'\Sigma_0^{1/2})\\\end{array}\end{equation}
where we used that $\theta_1=-(1/2)\Sigma_0^{-1}, \theta_2=\Pi_0\Sigma_0^{-1}$.
To bound $\min_\gamma F[\gamma,\gamma]/\|\gamma\|^2$ from below let $$\mu = 1 \vee \lambda_{max}(4\Pi_0'Z'Z/n)\lambda_{max}(\Sigma_0)/[\lambda_{min}(\Sigma_0)\lambda_{min}(Z'Z/n)].$$

We consider two cases. First assume $\|\gamma_2\|\geq 2\mu \|\gamma_1\|$. Only the second term in (\ref{BigF}) can be negative and we bound its magnitude by $$\begin{array}{rl}
|(4/n)\trace(\gamma_1'\Pi_0'Z'Z\gamma_2'\Sigma_0)|& =|(4/n)\trace(\Pi_0'Z'Z\gamma_2'\Sigma_0\gamma_1')|\\
& \leq \|4\Pi_0'(Z'Z/n)\gamma_2\|\|\Sigma_0\gamma_1'\|\\
& \leq \lambda_{max}(4\Pi_0'Z'Z/n)\|\gamma_2\| \lambda_{max}(\Sigma_0)\|\gamma_1\|\\
& \leq \lambda_{min}(\Sigma_0)\lambda_{min}(Z'Z/n) \mu \|\gamma_2\|\ \|\gamma_1\|.
\end{array}$$
Since $\|\gamma_2\|$ is large in this case, we use the third term in (\ref{BigF}) to control the potential negative term, namely, we have
$$\begin{array}{rl}
F[\gamma,\gamma] & \geq (1/n)\trace(\Sigma_0^{1/2}\gamma_2'Z'Z\gamma_2'\Sigma_0^{1/2})-|(4/n)\trace(\gamma_1'\Pi_0'Z'Z\gamma_2'\Sigma_0)| \\
& \geq (1/2)\lambda_{min}(\Sigma_0)\lambda_{min}(Z'Z/n) \|\gamma_2\|^2 \\
& \geq (1/2)\lambda_{min}(\Sigma_0)\lambda_{min}(Z'Z/n)(1/(1+[1/\mu^2])\|\gamma\|^2.\end{array}$$

Otherwise, we can assume that  $\|\gamma_2\|\leq 2\mu \|\gamma_1\|$. Because $-\frac{1}{4n} \trace(Z\theta_2\theta_1^{-1}\theta_2'Z')$ is a convex function in the relevant range,\footnote{Indeed, over $\{X=(X_1,X_2):X_1\succeq 0\}$, we have $$\trace(A X_2X_1^{-1}X_2'A') = \min_{M} \trace(M) : \left[\begin{array}{cc} M & X_2'A' \\ A X_2 & X_1 \end{array}\right]\succeq 0.$$} we have
\begin{equation}\label{Eq:positive}
\begin{array}{c} (4/n)\trace(\gamma_1'\Pi_0'Z'Z\Pi_0\gamma_1')+(4/n)\trace(\gamma_1'\Pi_0'Z'Z\gamma_2'\Sigma_0)+\\ + (1/n)\trace(\Sigma_0^{1/2}\gamma_2'Z'Z\gamma_2'\Sigma_0^{1/2}) \geq 0.\end{array}\end{equation}
Therefore, by (\ref{Eq:positive}) we have  $$\begin{array}{rl}F[\gamma,\gamma] & \geq 2\trace(\Sigma_0^{1/2}\gamma_1'\Sigma_0\gamma_1'\Sigma_0^{1/2}) \\
 & \geq \lambda_{min}(\Sigma_0)^2\|\gamma_1\|^2\\
 &\geq \lambda_{min}(\Sigma_0)^2\|\gamma\|^2(1/[1+4\mu^2]).\end{array}$$
where we used $\|\gamma_2\|\leq 2\mu \|\gamma_1\|$. The result follows.$\square$


\begin{lemma}\label{MultivariateLinearModelB1nB2n}
In the multivariate linear model, we have
$$ B_{1n}(c) = O(d_z) \ \ \mbox{and} \ \ B_{2n}(c) = O(d_z^2)$$ where the constants can depend on the maximal eigenvalues of $J^{-1}$ and $\Sigma_0$, and maximal singular value of $\Pi_0$.
\end{lemma}
\textbf{Proof:} Let $y_i \in \RR^{d_y}$,
$X_i = (y_iy_i', z_iy_i' )$ we have
$$ y_iy_i' = (u_i+z_i\Pi)(u_i' + \Pi'z_i') = u_iu_i' + u_i\Pi'z_i' + z_i\Pi u_i'+z_i\Pi\Pi'z_i'$$ so that if $u_i\sim N(0,\Sigma)$, we have $ y_iy_i' - E[y_iy_i'] = u_iu_i' - \Sigma + u_i\Pi'z_i' + z_i\Pi u_i'$.
Similarly,
$$ z_iy_i' = z_i(u_i' + \Pi'z_i') = z_iu_i' + z_i\Pi'z_i'$$ so that if $u_i\sim N(0,\Sigma)$, we have $ z_iy_i' - E[z_iy_i'] = z_iu_i'$. Thus, for $a=(a_1',a_2')'$ and $J^{-1} = [ J^{-1}_1 ; J^{-1}_2]$,
$$\begin{array}{rl}
 \sp{a}{J^{-1}(X - E[X])}& = \trace(a_1'J^{-1}_1[u_iu_i' - \Sigma + u_i\Pi'z_i' + z_i\Pi u_i']) \\
 & \ \ + \trace(a_2'J^{-1}_2[z_iu_i']).\\
\end{array}
 $$
Define $W_0 = u_iu_i'-\Sigma$, $W_1=u_i\Pi'z_i' + z_i\Pi u_i'$, $W_2=z_iu_i'$, $\tilde a_1 = a_1'J^{-1}_1$ and $\tilde a_2 = a_2'J^{-1}_2$.
By the symmetry of the probability distribution of $u$, we have
 $$ \begin{array}{rcl}
&& |E\[ \sp{a}{J^{-1}(X - E[X])}^3 \]| \\
&& = |E[ \trace^3(\tilde a_1W_0) + 3\trace(\tilde a_1W_0)\{\trace^2(\tilde a_1 W_1)+\trace^2(\tilde a_2W_2)\}\\
&& + 6 \trace(\tilde a_1W_0)\trace(\tilde a_1W_1)\trace(\tilde a_2W_2)]|\\
&& \leq E[ |\trace(\tilde a_1W_0)|^3]+ 3E[ |\trace(\tilde a_1W_0)|^3]^{1/3} E[ |\trace(\tilde a_1W_1)|^3]^{2/3}\\
&& +  3E[ |\trace(\tilde a_1W_0)|^3]^{1/3}E[ |\trace(\tilde a_2W_2)|^3]^{2/3} \\
&& + 6E[ |\trace(\tilde a_1W_0)|^3]^{1/3}E[ |\trace(\tilde a_1W_1)|^3]^{1/3}E[ |\trace(\tilde a_2W_2)|^3]^{1/3}\\

& & = O(d_z)\end{array} $$\noindent which follows from $$E[ |\trace(\tilde a_1W_0)|^3] \leq C \ \ \mbox{and} \ \ \max\{E[ |\trace(\tilde a_1W_1)|^3], E[ |\trace(\tilde a_2W_2)|^3] \} \lesssim d_z^{3/2}.$$ The two inequalities above follow from the Gaussianity of $u$, Lemma \ref{lemma:LV-R},  $J$ has eigenvalues bounded away from zero and from above uniformly in $n$, so that $\|\tilde a_1\|\leq C$ and $\|\tilde a_2\|\leq C$, $\|z_i\| \lesssim d_z^{1/2}$, and $\Pi$ having bounded singular eigenvalues.

To obtain the second result, we have
{\small $$ \begin{array}{rcl}
E\[ \sp{a}{J^{-1}(X - E[X])}^4 \] &\leq & 2^4E\[\trace^4(a_1'J^{-1}_1[u_iu_i' - \Sigma + u_i\Pi'z_i' + z_i\Pi u_i'])\]+\\
& & + 2^4E\[\trace^4(a_2'J^{-1}_2z_iu_i') \].
\end{array}$$}
Similar calculations yield
$$ E\[ \sp{a}{J^{-1}(X - E[X])}^4 \] = O(d_z^2).\square$$


\begin{lemma}\label{Lemma:SURtheta} In the seemingly unrelated regressors model, for every $\kappa >0$, uniformly in $\gamma=(\gamma_1,\gamma_2)\in B_{\dim_1}(0,\kappa N_n)$ we have
$$\sqrt{n}(\theta(\eta_0+\gamma/\sqrt{n})- \theta(\eta_0)) = \left( \begin{array}{c} -\gamma_1/2 \\ \Pi_0\gamma_1 + \gamma_2\Sigma_0^{-1} \end{array}\right)+R_{1n}(\gamma) $$ where $\|R_{1n}(\gamma)\|\leq\kappa^2N_n^2/\sqrt{n}$.
\end{lemma}
\textbf{Proof:} By direct calculations we have
$$ \nabla\theta(\eta)[\gamma] = \left( \begin{array}{c} -\gamma_1/2 \\ \Pi\gamma_1 + \gamma_2\Sigma^{-1} \end{array}\right) \ \ \mbox{and} \ \ \nabla^2\theta(\eta)[\gamma,\gamma] = \left( \begin{array}{c} 0 \\ 2\gamma_2\gamma_1 \end{array}\right).$$
It follows that $\|2\gamma_2\gamma_1\| \leq 2\|\gamma_2\| \ \|\gamma_1\| \leq \|\gamma_2\|^2 + \|\gamma_1\|^2$, so that $\|\nabla^2\theta(\eta)[\gamma,\gamma]\| \leq \|\gamma\|^2$ for all $\eta$.
Thus, for any $\gamma \in B_{\dim_1}(0,\kappa N_n)$, we can set $R_{2n}=0$ and $R_{1n}$ satisfies
$$ \|R_{1n}(\gamma)\| \leq \sqrt{n} \sup_{\eta}\|\nabla^2\theta(\eta)[\gamma/\sqrt{n},\gamma/\sqrt{n}]\| \leq \|\gamma\|^2/\sqrt{n}\leq \kappa^2N_n^2/\sqrt{n}.\square$$


 \section*{Acknowledgements}
The authors gratefully acknowledge the research support from an NSF grant.  The authors would like to thank the editor and the referees for the useful comments.


\begin{thebibliography}{99}


\bibitem[\protect\citeauthoryear{Barndorff-Nielsen}{1978}]{bandorff}
 Barndorff-Nielsen, O.  (1978).
   \newblock Information and exponential families
    in statistical theory.
    \newblock Wiley Series in Probability and
    Mathematical Statistics. John Wiley \& Sons, Ltd., Chichester.


\bibitem[\protect\citeauthoryear{Belloni and Chernozhukov}{2009}]{BelloniChernozhukov2009}
 Belloni, A. and V. Chernozhukov  (2009).
 \newblock On the Computational Complexity of MCMC-based Estimators in Large Samples.
 \newblock Ann. Statist. 37, 2011-2055.


\bibitem[\protect\citeauthoryear{Bickel and Kleijn}{2012}]{BK2012}
Bickel, P. J.  and B. J. K. Kleijn (2012).
\newblock The semiparametric Bernstein–von Mises theorem.
\newblock Ann. Statist. Volume 40, Number 1 (2012), 206-237.


\bibitem[\protect\citeauthoryear{Bickel and Yahav}{1969}]{BY}
 Bickel, P. J.  and J.~A.~Yahav (1969).
\newblock Some contributions to the asymptotic theory of Bayes solutions.
\newblock  Z. Wahrsch. Verw. Geb 11, 257-276.

\bibitem[\protect\citeauthoryear{Bontemps}{2011}]{Bontemps2011}
 Bontemps, D. (2011).
\newblock Bernstein-von Mises theorems for Gaussian regression with increasing number of regressors.
\newblock    Ann. Statist. Vol. 39, No. 5, 2557--2584.


\bibitem[\protect\citeauthoryear{Boucheron and Gassiat}{2009}]{BoucheronGassiat2009}
Boucheron, S. and E. Gassiat (2009).
\newblock A Bernstein-Von Mises Theorem for discrete probability distributions.
\newblock  Electron. J. Statist. Volume 3 (2009), 114-148.

\bibitem[\protect\citeauthoryear{Chamberlain}{1987}]{chamberlain}
 Chamberlain, G. (1987).
\newblock Asymptotic efficiency in estimation with conditional moment restrictions.
\newblock    Journal of Econometrics 34, no. 3, 305--334.

\bibitem[\protect\citeauthoryear{Chernozhukov and Hong}{2003}]{CH}
Chernozhukov, V. and H.~Hong (2003).
\newblock   { An MCMC approach to classical estimation}.
\newblock    Journal of Econometrics 115, 293-346.



\bibitem[\protect\citeauthoryear{Diaconis et al.}{2012}]{Diaconis2012} Diaconis, P., S. Holmes, and M. Shahshahani (2013).
     \newblock Sampling from a manifold. In Advances in Modern Statistical Theory and Applications: A Festschrift in honor of Morris L. Eaton, pp. 102-125. Institute of Mathematical Statistics.


\bibitem[\protect\citeauthoryear{Donald et al.}{2003}]{Donald-Imbens-Newey}
Donald,  S. G.,  G. W. Imbens, W. K. Newey (2003).
\newblock    Empirical likelihood estimation and consistent tests with conditional moment
restrictions.
\newblock Journal of Econometrics, 117, no. 1, 55--93.

\bibitem[\protect\citeauthoryear{Dudley}{2000}]{Dudley}
Dudley, R. (2000).
 \newblock  Uniform Cental Limit Theorems.
 \newblock   Cambridge Studies in advanced mathematics.

\bibitem[\protect\citeauthoryear{Efron}{1975}]{Ef75}
Efron, B.  (1975).
  \newblock Defining the curvature of a statistical problem.
   \newblock    Annals of Statistics, Vol. 3, No. , 1189-1242.

\bibitem[\protect\citeauthoryear{Efron}{1978}]{Ef78}
Efron, B.  (1978).
 \newblock The Geometry of Exponential Families
 \newblock    Annals of Statistics, Vol. 6, No. 2, 362-376.

\bibitem[\protect\citeauthoryear{Hansen and Singleton}{1982}]{hansen:singleton}
Hansen, L. P.  and K. J. Singleton (1982).
  \newblock Generalized instrumental variables estimation of nonlinear rational expectations models.
   \newblock     Econometrica  50, no. 5, 1269--1286.

\bibitem[\protect\citeauthoryear{Ibragimov and Has'minskii}{1981}]{IH}
Ibragimov, I. and R.~Has'minskii (1981).
     \newblock Statistical Estimation: Asymptotic Theory.
    \newblock Springer, Berlin.

\bibitem[\protect\citeauthoryear{Imbens}{1997}]{imbens}
Imbens, G. W.  (1997).
  \newblock One-step estimators for over-identified generalized method of moments models.
   \newblock   Rev. Econom. Stud. 64, no. 3, 359--383.

\bibitem[\protect\citeauthoryear{Ghosal}{2000}]{G2000}
Ghosal, S. (2000).
 \newblock Asymptotic normality of posterior distributions for exponential families when the number of parameters tends to infinity.
 \newblock    Journal of Multivariate Analysis, vol 73, 49-68.

\bibitem[\protect\citeauthoryear{Heckman}{1974}]{Heckman1974}
Heckman, J. (1974).
  \newblock Shadow prices, market wages, and labor supply.
   \newblock Econometrica 42, 679--694.


\bibitem[\protect\citeauthoryear{Hunter}{2007}]{hunter2007}
Hunter, D. R. (2007).
   \newblock Curved exponential family models for social networks.
    \newblock Social Networks 29,  216--230.

\bibitem[\protect\citeauthoryear{Hunter and Handcock}{2006}]{hh05}
Hunter, D. R.  and M. S. Handcock (2006).
 \newblock Inference in curved exponential family models for networks.
  \newblock    Journal of Graphical and Computational Statistics, Vol. 15, Number 3,  565--583.

\bibitem[\protect\citeauthoryear{Lehmann and Casella}{1998}]{lehmann}
Lehmann, E. L.  and G. Casella (1998).
  \newblock Theory of point estimation.
   \newblock Second edition. Springer Texts in Statistics.
    Springer-Verlag, New York.

\bibitem[\protect\citeauthoryear{Lov\'asz and Vempala}{2005}]{LV01}
Lov\'asz, L. and Santosh Vempala (2007).
\newblock The geometry of logconcave functions and sampling algorithms.
\newblock Random Structures \& Algorithms 30, no. 3, 307-358.

\bibitem[\protect\citeauthoryear{Newey and McFadden}{1994}]{Newey-McFadden}
Newey, W.~K. and D.~McFadden (1994).
 \newblock Large Sample Estimation and Hypothesis Testing.
 \newblock    Handbook of Econometrics, Volume IV, Chapter 36, Editedby R.F.
    Engle and D.L. McFadden, Elsevier Science.

\bibitem[\protect\citeauthoryear{Portnoy}{1988}]{Portnoy}
Portnoy,  S.  (1988).
 \newblock Asymptotic behavior of likelihood methods for
exponential families when the number of parameters tends to
infinity.
 \newblock Ann. Statist. 16, no. 1, 356--366.

\bibitem[\protect\citeauthoryear{Qin and Lawess}{1994}]{Qin-Lawless}
 Qin, J. and J. Lawless (1994).
  \newblock   Empirical Likelihood and General Estimating Equations.
 \newblock    The Annals of Statistics,  Vol. 22, No. 1, 300--325.

\bibitem[\protect\citeauthoryear{Garderen}{1997}]{Garderen}
van Garderen, K.~J.~ (1997).
   \newblock Curved Exponential Models in Econometrics.
   \newblock  Econometric Theory, 13, 771--790.


\bibitem[\protect\citeauthoryear{Zellner}{1962}]{Zellner}
Zellner, A. (1962).
   \newblock An efficient method of estimating seemingly unrelated regressions and tests of aggregation bias.
  \newblock   Journal of the American Statistical Association, 57, 348-368.

\bibitem[\protect\citeauthoryear{Zellner}{1971}]{zellner:text}
Zellner, A.  (1971).
  \newblock An Introduction to Bayesian Inference in Econometrics
   \newblock John Wiley \& Sons: New York, NY.



 \end{thebibliography}
\end{document}